\Crefname{ALC@unique}{Line}{Lines} % <- Preamble
\crefname{hypothesis}{Hypothesis}{Hypotheses}
\title{The Computation of Approximate Generalized Feedback Nash Equilibria\thanks{This research is supported by the DARPA Assured Autonomy Program, the ONR Basic Research Challenge on Multibody Systems, the NASA ULI program, SRC CONIX, and the NSF CPS program.}}
\author{Forrest Laine \thanks{Electrical Engineering and Computer Sciences, University of California, Berkeley. (\email{forrest.laine@berkeley.edu}).} 
\and David Fridovich-Keil\thanks{Aerospace Engineering and Engineering Mechanics, The University of Texas at Austin} \and Chih-Yuan Chiu\footnotemark[2] \and Claire Tomlin\footnotemark[2] }
\newcommand{\xinit}{\hat x_1}
\newcommand{\revise}[1]%
\begin{document}

\maketitle

% REQUIRED
\begin{abstract}
  We present the concept of a Generalized Feedback Nash Equilibrium (GFNE) in dynamic games, extending the Feedback Nash Equilibrium concept to games in which players are subject to state and input constraints. We formalize necessary and sufficient conditions for (local) GFNE solutions at the trajectory level, which enable the development of efficient numerical methods for their computation. Specifically, we propose a Newton-style method for finding game trajectories which satisfy necessary conditions for an equilibrium, which can then be checked against sufficiency conditions. We show that the evaluation of the necessary conditions in general requires computing a series of nested, implicitly-defined derivatives, which quickly becomes intractable. To this end, we introduce an approximation to the necessary conditions which is amenable to efficient evaluation, and in turn, computation of solutions. We term the solutions to the approximate necessary conditions Generalized Feedback Quasi-Nash Equilibria (GFQNE), and we introduce numerical methods for their computation. In particular, we develop a Sequential Linear-Quadratic Game approach, in which a LQ local approximation of the game is solved at each iteration. The development of this method relies on the ability to compute a GFNE to inequality- and equality-constrained LQ games, and therefore specific methods for the solution of these special cases are developed in detail. We demonstrate the effectiveness of the proposed solution approach on a dynamic game arising in an autonomous driving application. 
  \end{abstract}

% REQUIRED
\begin{keywords}
  Generalized Feedback Nash Equilibrium, Dynamic Games, Mathematical Programming
\end{keywords}

% REQUIRED
\begin{AMS}
  90C33, 90C39, 90C59
\end{AMS}

\section{Introduction}
\label{sec:intro}
There has been a recent growing interest in the application of game-theoretic concepts to applications in automated systems, as explored in  \cite{laine2021multi,wang2018game,di2019newton,cleac2019algames,fridovich2019efficient,fridovich2019iterative,di2020local,fisac2018hierarchical}. Indeed, numerous problems arising in these domains can be modeled as games, and particularly as dynamic or repeated games. For the purposes considered in this paper, we restrict our attention to discrete-time dynamic games, in which players have the ability to influence the state of the game over a finite set of game \emph{stages}. Associated with dynamic/repeated game solutions are game trajectories, which capture the evolution of the \emph{continuous} game state and player inputs over the sequence of stages. 

Solutions to dynamic games have been studied extensively, as in \cite{basar1998dynamic}. Within the theory of dynamic games, there are a variety of information patterns that can be associated with the game, and each of these patterns results in a fundamentally different solution. Perhaps the simplest information pattern is the \emph{open-loop} pattern, in which the repeated nature of the game is ignored. In this pattern the stages of the game are combined into a single \emph{static} game, and the entire trajectory is chosen at once to satisfy a Nash, Stackelberg, or other type of equilibrium. When constraints on the state variables are imposed upon the players, a generalized equilibrium must be considered.  Formulating games with an open-loop information pattern has many advantages as the resultant static game often admits known methods for analysis and computing solutions, as presented, for example, in  \cite{facchinei2009generalized,facchinei2010generalized}. However, by ignoring the dynamic nature of these games, the expressiveness of the resultant solutions are significantly limited. Intelligent game play in repeated games often involves observing the evolving game state and reacting accordingly. 

Reactive game-play can emerge when associating a \emph{closed-loop} information pattern to the game of interest. Effectively, games with this information pattern are such that the players choose control \emph{policies} which define the control input as a function of the game state at that stage. When those policies at each game stage are chosen to constitute an equilibrium for the dynamic subgame played over the subsequent game stages, the resultant solution is called a feedback equilibrium. This type of solution is capable of capturing strategies for a player which anticipate and account for the reaction of other players. Some advantages of this type of solution are explored in the autonomous driving context in, e.g., \cite{laine2020multihypothesis}.

While feedback equilibrium solutions are often desirable over their open-loop counterparts, for all but simple cases, there do not exist well-developed numerical routines for computing them. The unconstrained Linear-Quadratic (LQ) setting is perhaps the simplest case, and methods for computing feedback equilibria for these games are well known, as presented in \cite{basar1998dynamic}. The extension to the computation of a feedback Nash equilibrium for a class of inequality-constrained LQ games is introduced in \cite{reddy2017feedback,reddy2019open}, although restrictive assumptions are made on the form of the dynamics, constraints and cost terms of the game. Numerous other approaches have considered the computation of feedback equilibria under various special cases, such as those in \cite{tanwani2018feedback,tanwani2019feedback,kossioris2008feedback}, among others. Methods for computing feedback Nash equilibria have been recently developed in the unconstrained, nonlinear case using a value-iteration based approach  \cite{herrera2019algorithm}, and an iterative LQ game approach  \cite{fridovich2019efficient}. Nevertheless, to the best of our knowledge, no methods exist for computing feedback equilibria in games with constraints appearing on both the state and input dimensions, both in the general LQ and nonlinear settings.  

Since many emerging applications of dynamic games involve nonlinear dynamics as well as nonlinear constraints on the game states and inputs, we have pursued the development of a robust and efficient method for computing feedback equilibria in this setting. The result of that work is the topic of this paper. 

The outline of the paper is the following. In \cref{sec:formulation} we introduce the concept of a Generalized Feedback Nash Equilibrium (GFNE), which formally defines the feedback concept in the constrained setting. We discuss pitfalls with a parameterized approach to encoding GFNE problems as a means to motivate and introduce a non-parametric alternative. We then develop necessary and sufficient conditions on game trajectories to satisfy a GFNE using this non-parametric formulation. Challenges associated with the computation of such GFNE are highlighted, and a close approximation is introduced which is amenable to efficient computation. Finally, numerical methods for the computation of such approximate solutions are developed in detail, for the equality-constrained LQ setting (\cref{sec:equality_lq}), inequality-constrained LQ setting (\cref{sec:inequality_lq}), and ultimately, the general nonlinear setting (\cref{sec:nonlinear_games}). We demonstrate our method on an application to autonomous driving in \cref{sec:example} and conclude the paper in \cref{sec:conclusion}.

\section{Formulation}
\label{sec:formulation}
We focus our attention to the class of $N$-player discrete-time, deterministic, infinite, general-sum dynamic games of discrete stage-length $T$. Let $\mathbf{N}$ denote the set $\{1,...,N\}$, and similarly $\mathbf{T}$ the set $\{1,...,T\}$. We also make use of the sets $\mathbf{T^+} := \mathbf{T} \cup \{T+1\}$,  $\mathbf{T_t} := \{t,...,T\}$, and $\mathbf{T_t^+} := \{t,...,T+1\}$.  The game state at each discrete time-step $t$ is represented by $x_t \in \mathcal{X} = \mathbb{R}^n$. The game is assumed to start at stage $t=1$, from a pre-specified initial state $\xinit$. Throughout this paper we refer to subgames starting at stage $t$, which refers to the game played over a portion of the original game, on the stages $\{t,...,T\}$. 

The evolution of the game state is described by the dynamic equation:
\begin{equation}
    \label{eq:dynamics}
    x_{t+1} = f_t(x_t, u_t^1, ..., u_t^N), \ \ t\in\mathbf{T},
\end{equation}
where $ u_t^i \in \mathcal{U}_t^i = \mathbb{R}^{m_t^i}$ are the control variables chosen by players $i\in\mathbf{N}$ at time $t$. Let $m_t:=\sum_{i=1}^N m_t^i$, and $m_t^{-i}:=m_t-m_t^i$. 

% Players will choose control variables by means of policies, which are measurable maps from the state space to each player's control space. We denote the policy for player $i$ at stage $t$ as $\pi_t^i \in \varPi_t^i = \mathcal{X}\to\mathcal{U}^i$. 

To simplify the notation in definitions and derivations, we make use of the following shorthand to refer to various sets of state and control variables:

%\begin{notation}
\vspace{0.15em}
\emph{Notation reference:}
\begin{align*}
    x &:= \revise{\{}x_1,x_2,...,x_{T+1}\revise{\}}, \\
    u^i &:=\revise{\{}u_1^i,u_2^i,...,u_T^i\revise{\}}, \\
    u &:= \revise{\{}u^1,...,u^N\revise{\}}, \\
    % u^{-i} &:= (u^1,...,u^{i-1},u^{i+1},...,u^N), \\
    u_t &:=\revise{\{}u_t^1,...,u_t^N\revise{\}}, \\
    u_t^{-i} &:= \revise{\{}u_t^1,...,u_t^{i-1},u_t^{i+1},...,u_t^N\revise{\}}, \\
    \revise{\{}u_t^i,u_t^{-i}\revise{\}} &:= \revise{\{}u_t^1,...,u_t^N\revise{\}} = u_t
    % u_{t:t'} &:= (u_t,...,u_{t'}), \ t\leq t', \\
    % u_{t:t'}^i &:= (u_t^i,...,u_{t'}^i), \ t\leq t', \\
    % u_{t:t'}^{-i} &:= (u_t^{-i},...,u_{t'}^{-i}), \ t\leq t'. \\
    % \pi^i &:= (\pi_1^i,...,\pi_T^i), \\
    % \pi_t &:= (\pi_t^1,...,\pi_t^N), \\
    % \pi_t^{-i} &:= (\pi_t^1,...,\pi_t^{i-1},\pi_t^{i+1},...,\pi_t^N)
\end{align*}
\revise{Note that the above collections may be interpreted as sets of variables or as column vectors. In particular, at times throughout this work we will utilize the interpretation that $u_t := [(u_t^1)^\intercal, ..., (u_t^N)^\intercal]^\intercal$}.  
%\end{notation}

Each player in the game is associated with time-separable cost-functionals:
\begin{equation}
    \label{eq:cost_functional}
    L^i(x,u^1,...,u^N) := \sum_{t=1}^T l_t^i(x_t,u_t) + l^i_{T+1}(x_{T+1})
\end{equation}

% \begin{subequations}
%     \label{eq:cost_functional}
%     \begin{align}
%     L_{T+1}^i(x,u) &:= l_{T+1}^i(x_{T+1}) \\
%     L_t^i(x,u) &:= l_t^i(x_t,u_t) + L_{t+1}^i(x,u), \ t\in\mathbf{T}.
%     \end{align}
% \end{subequations}

Furthermore, each player may have  stage-wise, non-dynamic, equality and inequality constraints.

\begin{subequations}
\begin{align}
    &0 = h_t^i(x_t,u_t), \ t\in\mathbf{T} &0 = h_{T+1}^i(x_{T+1}) \label{eq:equality_constraints} \\ 
    &0 \leq g_t^i(x_t,u_t) \ t\in\mathbf{T,} &0 \leq g_{T+1}^i(x_{T+1}) \label{eq:inequality_constraints}
\end{align}
\end{subequations}

Let the dimension of the constraints $h^i_t$ and $g^i_t$, for all $t\in\mathbf{T^+}$ and $i\in\mathbf{N}$, be denoted as $a^i_t \geq 0$ and $b^i_t \geq 0$, respectively. 
% The \emph{constraint violation} of player $i$ at the sub-game starting from stage $t$ is denoted by 
% \begin{subequations}
% \begin{align}
%     e_t^i(x_t,u_t,...,x_{T+1}) &:= \begin{bmatrix} 
%         h_t^i(x_t,u_t) \\
%         [g_t^i(x_t,u_t)]_- \\
%         \vdots \\
%         h_{T+1}^i(x_{T+1}) \\
%         [g_{T+1}^i(x_{T+1})]_- \end{bmatrix}.
%         \end{align}
% \end{subequations}
% Above, we use the shorthand notation $[y]_- = \min(y,0)$ as the element-wise min-with-zero. 
Define $V_t^i: \mathcal{X} \to \mathbb{R} \cup \{\infty\}$ as the \emph{Value-function} for player $i\in\mathbf{N}$ at stage $t\in\mathbf{T^+}$, and $Z_t^i: \mathcal{X} \times \mathcal{U}_t^i \times ... \times \mathcal{U}_t^N \to \mathbb{R} \cup \{\infty\}$ the \emph{Control-Value-function} for player $i\in\mathbf{N}$ at time $t\in\mathbf{T}$.  

A Generalized Feedback Nash Equilibrium is defined in terms of maps $\pi_t^i: \mathcal{X}\to\mathcal{U}_t^i$, for $i\in\mathbf{N}$, $t\in\mathbf{T}$, which we refer to as feedback policies or strategies. The feedback policies, Value-functions, and Control-Value-functions are together defined according to the following recursive relationships \cref{eq:value_function_terminal}-\cref{eq:value_functions}:

\begin{equation}
    \label{eq:value_function_terminal}
    V_{T+1}^i(x_{T+1}) := \left\{\begin{array}{@{}ll@{}}
        \multirow{2}{*}{$l_{T+1}^i(x_{T+1})$,} & 0 = h_{T+1}^i(x_{T+1}) \\
        & 0 \leq g_{T+1}^i(x_{T+1}) \\
        \infty, & \text{else}
        \end{array}\right. 
\end{equation}
Given $V_{t+1}^i$ for some $t \in \mathbf{T}$ and $i\in\mathbf{N}$, we define $Z_t^i$ by
\begin{equation}
    \label{eq:q_functions}
    Z_t^i(x_t,u_t^1,...,u_t^N) := \left\{\begin{array}{@{}ll@{}}
        \multirow{2}{*}{$l_{t}^i(x_{t},u_t) + V_{t+1}^i(f_t(x_t,u_t))$,} & 0 = h_{t}^i(x_t,u_t) \\
        & 0 \leq g_t^i(x_t,u_t) \\
        \infty, & \text{else}
        \end{array}\right. 
\end{equation}
% \frank{Just to be meticulous, it might be nice to write before (1.5): \say{Given $V_{t+1}$ for some $t \in \{1, \cdots, T\}$, we define $Q_t$ by ...}, and write before (1.7), \say{Given $Q_t$ for some $t \in \{1, \cdots, T\}$, we define $V_t$ by ...}. When I first read this, I was confused because I thought (1.5) were using circular logic.}
For a particular state $x_t$ at stage $t$, the feedback policies $\pi_t$ are defined to return a local generalized Nash equilibrium solution for the static game defined in terms of the $N$ Control-Value-functions evaluated at $x_t$ (one for each player). 
\begin{equation}
    \label{eq:policies}
    \begin{alignedat}{5}
    \tilde{u}_t = \pi_t(x_t) &\Longrightarrow \\ Z_t^1(&x_t,\tilde{u}_t^{1},...,\tilde{u}_t^{N}) &&\leq Z_t^1(x_t,u_t^1,\tilde{u}_t^{2},...,\tilde{u}_t^{N}), 
     \ \ &&\forall u_t^1 \in \mathcal{N}(\tilde{u}_t^1), \\
     &&& \ \ \vdots \\
     Z_t^N(&x_t,\tilde{u}_t^{1},...,\tilde{u}_t^{N}) &&\leq Z_t^N(x_t,\tilde{u}_t^{1},...,\tilde{u}_t^{N-1},u_t^N), 
     \ \ \ \ &&\forall u_t^N \in \mathcal{N}(\tilde{u}_t^N),
    % \pi_t^i(x_t) := \tilde{u}_t^{i} \in \underset{u_t^i \in \mathcal{U}_t^i}{\text{argmin}} \  Z_t^i(x_t,\tilde{u}_t^{1},...,\tilde{u}_t^{i-1},u_t^i,\tilde{u}_t^{i+1},...,\tilde{u}_t^{N}).
    \end{alignedat}
\end{equation}
The set $\mathcal{N}(\tilde{u}_t^i)$ is some neighborhood around $\tilde{u}_t^i$. \revise{It is possible that no such generalized equilibrium exists, due to the non-convexity of the Control-Value-functions w.r.t. the control variables \cite{dutang2013existence}. In such a case, the feedback policies are undefined, as are all preceding Value and Control-Value functions.} If a generalized equilibrium exists, it may be non-unique, and potentially non-isolated. For the purposes considered here, we require only that for any state $x_t$, the policies evaluate to one arbitrarily chosen, yet particular, local equilibrium. A more stringent definition for the policies $\pi_t$ could require that the inequalities in \cref{eq:policies} hold over the entire sets $\mathcal{U}_t^i$ rather than some local neighborhood. In any case, the Value-functions for stages $t\in\mathbf{T}$ are defined as 
\begin{equation}
    \label{eq:value_functions}
    V_t^i(x_t) := Z_t^i(x_t,\pi_t^1(x_t),...,\pi_t^N(x_t)).
\end{equation} 

\revise{At each stage of the game, it may be that all player's individual constraints \cref{eq:equality_constraints,eq:inequality_constraints} are impossible to satisfy for arbitrary choice of the game state at that stage. If for some particular state $x_t$, player $i$'s constraints are inadmissible, the associated value $V_t^i(x_t)$ will be infinite. Without computing the feedback policies $\pi_s, \ s\in\mathbf{T_{t+1}}$, it is not possible to determine whether the constraints for any given player are admissible, since their satisfaction depends on the actions of all other players in the remaining game stages. Nevertheless, throughout this work, we assume that the constraints for all players are indeed admissible, when starting from known initial state $\hat{x}_1$, and with other players choosing their actions according to the equilibrium feedback policies.}

\begin{definition}[GFNE] A Local Generalized Feedback Nash Equilibrium originating from $\hat{x}_1$ is defined by a set of policies $\pi_t^i$, $t\in\mathbf{T}$, $i\in\mathbf{N}$ defined in \cref{eq:policies}, such that the value of $V_1^i(\hat{x}_1)$, defined in \cref{eq:value_functions}, is finite for all $i\in\mathbf{N}$. In this paper we refer to local GFNE whenever we write GFNE. 
\end{definition}

Consider a collection of policies constituting a GFNE. Let the corresponding \emph{equilibrium trajectory} be denoted by $x_t^*, u_t^*$ such that
\begin{equation}
\label{eq:gfne_traj}
\begin{alignedat}{3}
    x_1^* &:= \xinit, \\
    u_t^{i*} &:= \pi_t^i(x_t^*), \ &&t\in\mathbf{T}, \\
    x_{t+1}^* &:= f_t(x_t^*, u_t^{1*},...,u_t^{N*}), \ \ \ &&t\in\mathbf{T}.
\end{alignedat}
\end{equation}

Despite the notion of a GFNE being well-defined, the presented formulation does not lend itself to the computation of such equilibrium trajectories. The remainder of this work is devoted to an alternate encoding of the GFNE problem, which is amenable to the computation. We begin the presentation of this approach by expressing the policies at each stage $t$ in terms of the subgame starting at that stage.

\begin{theorem}
\label{thm:infeasible}
The policies defined in \cref{eq:policies} can equivalently be expressed in terms of the nested Generalized Nash Equilibrium Problems \cref{eq:gn_problem_t}.

% \begin{subequations}
% \label{eq:gn_problem_T}
% \begin{align}
%     \pi_T^i(x_T) := \tilde{u}^i_T \in \underset{u_T^i }{\textrm{\emph{arg}}} \underset{\substack{u_T^i,\\ x_{T+1}}}{\textrm{\emph{min}}}\ &l_T^i(x_T,u_T^i,\tilde{u}_T^{-i}) + l_{T+1}^i(x_{T+1}) ~  \\
%     \textrm{\emph{s.t.}} \ \ \ \ \ \ \ \ \    &0 = x_{T+1} - f(x_T,u_T^i,\tilde{u}_T^{-i}) \label{eq:term_dynamics} \\
%     % u_T^i &\in \underset{u_T^i}{\text{\emph{argmin}}} \left\lVert e_T^i(x_T,u^i_T,\tilde{u}_T^{-i},x_{T+1}) \right\rVert \label{eq:argmin_T}
%     & 0 = h_T^i(x_T,u_T^i,\tilde{u}_T^{-i}) \label{eq:penult_eq} \\
%     & 0 \leq g_T^i(x_T,u_T^i,\tilde{u}_T^{-i}) \label{eq:penult_ineq}\\
%     & 0 = h^i_{T+1}(x_{T+1}), \label{eq:ult_eq}\\
%     & 0 \leq g^i_{T+1}(x_{T+1}).\label{eq:ult_ineq}
% \end{align}
% \end{subequations}

\begin{subequations}
\label{eq:gn_problem_t}
\begin{alignat}{3}
    \pi_t^i(x_t) := \tilde{u}^i_t \in \underset{u_t^i }{\textrm{\emph{arg}}} \underset{\substack{u_{t:T}^i,\\
    \tilde{u}^{-i}_{t+1:T},\\
    x_{t+1:T+1}}}{\textrm{\emph{min}}}\ 
        &\sum_{s=t}^T l_s^i(x_s,u_s^i, \tilde{u}_s^{-i}) + l_{T+1}^i(x_{T+1}) 
    \\
    \textrm{\emph{s.t.}} \ \ \ \ 
     & 0 = \tilde{u}^{-i}_s - \pi^{-i}_s(x_s), \ &&s\in\mathbf{T_{t+1}} \label{eq:policy_of_other_controls}\\
    &0 = x_{s+1} - f_s(x_s,u_s^i, \tilde{u}_s^{-i}), \ &&s\in\mathbf{T_t} \label{eq:reg_dynamics} \\
    % \begin{bmatrix} u_t^i \\ \vdots  \\ u_{T}^i  \end{bmatrix} &\in \underset{u_{t:T}^i}{\text{\emph{argmin}}} \sum_{s=t}^T\left\lVert e_s^i(x_s,u^i_s,\tilde{u}_s^{-i}) \right\rVert + \left\lVert e_{T+1}^i(x_{T+1}) \right\rVert  \label{eq:argmin_t}
    & 0 = h_s^i(x_s,u_s^i, \tilde{u}_s^{-i}), &&s\in\mathbf{T_t} \label{eq:reg_eq}\\
    & 0 \leq g_s^i(x_s,u_s^i, \tilde{u}_s^{-i}), &&s\in\mathbf{T_t} \label{eq:reg_ineq}\\
    & 0 = h^i_{T+1}(x_{T+1}) \label{eq:reg_ult_eq} \\
    & 0 \leq g^i_{T+1}(x_{T+1}) \label{eq:reg_ult_ineq} 
\end{alignat}
\end{subequations}

\end{theorem}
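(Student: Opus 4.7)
The plan is to establish the equivalence by reverse induction on the stage index $t$, exploiting the nested dynamic-programming structure already present in the recursive definitions \cref{eq:value_function_terminal,eq:q_functions,eq:value_functions}. For the inductive book-keeping, it is cleanest to prove the following pair of statements jointly at each stage: (i) $\pi_t^i(x_t)$ belongs to the argmin set of \cref{eq:gn_problem_t}, and (ii) the optimal value of \cref{eq:gn_problem_t} equals $V_t^i(x_t)$.

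For the base case $t = T$, the set $\mathbf{T_{t+1}}$ is empty, so constraint \cref{eq:policy_of_other_controls} is vacuous. Expanding $Z_T^i$ using \cref{eq:q_functions} and \cref{eq:value_function_terminal} yields $l_T^i(x_T, u_T) + l_{T+1}^i(f_T(x_T, u_T))$ subject to the player-$i$ constraints at stages $T$ and $T+1$, which is precisely the objective and constraints of \cref{eq:gn_problem_t} once $x_{T+1}$ is introduced as a decision variable coupled by \cref{eq:reg_dynamics}. Minimizing over $u_T^i$ with $\tilde u_T^{-i}$ held fixed then establishes both (i) and (ii) directly from the definition \cref{eq:policies}.

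For the inductive step, suppose (i) and (ii) hold at stage $t+1$. The key observation is that \cref{eq:gn_problem_t} at stage $t$ decomposes into a stage-$t$ ``head'' (the $s=t$ objective term, player-$i$ constraints, and stage-$t$ dynamics coupling $u_t$ to $x_{t+1}$) and a ``tail'' over the remaining variables $u_{t+1:T}^i, \tilde u_{t+1:T}^{-i}, x_{t+2:T+1}$ that is structurally identical to \cref{eq:gn_problem_t} evaluated at stage $t+1$ from state $x_{t+1}$ with parameter $\tilde u_{t+1}^{-i} = \pi_{t+1}^{-i}(x_{t+1})$ enforced through \cref{eq:policy_of_other_controls}. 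By (ii) at stage $t+1$, the infimum of the tail equals $V_{t+1}^i(x_{t+1})$. Substituting this collapses the stage-$t$ program into
\begin{equation*}
\min_{u_t^i} \; l_t^i(x_t, u_t^i, \tilde u_t^{-i}) + V_{t+1}^i\bigl(f_t(x_t, u_t^i, \tilde u_t^{-i})\bigr)
\end{equation*}
subject to the stage-$t$ player-$i$ constraints, which is exactly $Z_t^i(x_t, u_t^i, \tilde u_t^{-i})$ by \cref{eq:q_functions}. Its argmin over $u_t^i$ is $\pi_t^i(x_t)$ and its optimal value is $V_t^i(x_t)$ by \cref{eq:policies} and \cref{eq:value_functions}, yielding (i) and (ii) at stage $t$.

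The main obstacle is the careful handling of two subtleties required to justify the tail-equals-$V_{t+1}^i$ substitution. First, the policies $\pi_s^i$ are only \emph{local} equilibria, so the equivalence must be read pointwise with $u_t^i$ restricted to a neighborhood of $\tilde u_t^i$ small enough that the successor state $f_t(x_t, u_t^i, \tilde u_t^{-i})$ stays in the region over which $V_{t+1}^i$ is well-defined as a local equilibrium value; outside such a neighborhood neither side of the claimed equivalence is meaningful. Second, one must verify that the $N$-player constraint accounting built into the recursive definitions, wherein $V_s^j$ is $+\infty$ whenever player-$j$ constraints become inadmissible, is faithfully mirrored by the player-$i$-only constraints \cref{eq:reg_eq,eq:reg_ineq,eq:reg_ult_eq,eq:reg_ult_ineq} appearing in \cref{eq:gn_problem_t}. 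This is precisely the role of the admissibility assumption stated in the paragraph preceding the theorem, which guarantees that constraints for players $j \neq i$ remain satisfied along the trajectory induced by their equilibrium policies, so that only player $i$'s constraints need to be enforced explicitly in the nested best-response program.
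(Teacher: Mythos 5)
Your proof is correct and follows essentially the same route as the paper's: backward induction in which the stage-$t$ program in \cref{eq:gn_problem_t} is split into a stage-$t$ head plus a nested tail identical to the stage-$(t+1)$ program, whose value collapses to $V_{t+1}^i(x_{t+1})$ because the other players' controls are pinned by \cref{eq:policy_of_other_controls}, so the head becomes $Z_t^i$ and the equivalence with \cref{eq:policies} follows. Your added remarks on the local-equilibrium neighborhoods and constraint admissibility are consistent with (and slightly more explicit than) the paper's treatment, but do not change the argument.
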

Here $\pi_t^{-i}(x_t) := (\pi_t^1(x_t),...,\pi_t^{i-1}(x_t),\pi_t^{i+1}(x_t),...,\pi_t^N(x_t))$, and the notation $\underset{a}{\text{arg}} \ \underset{a,b}{\text{min}}$ is used to indicate that the minimum is taken over $a$ and $b$, but only the value of $a$ at the minimum is returned. \revise{The shared game state $x_s, s\in\mathbf{T_{t+1}^+}$, and control variables for all other players, $\tilde{u}_s^{-i}, s\in\mathbf{T_{t+1}}$ appear as decision variables in player $i$'s decision problem. This is because the constraints \cref{eq:policy_of_other_controls,eq:reg_dynamics} render these variables \emph{implicit} variables, allowing them to be effectively shared by all players \cite[Def. 2]{kim2019solving}.} The value of the minimization appearing in \cref{eq:gn_problem_t} is considered infinite for any combination of optimization variables violating the constraints. Note also that the set $\mathbf{T}_{T+1}$ is empty, so for stage $t=T$, the constraint \cref{eq:policy_of_other_controls} vanishes. 

\begin{proof}

Starting with stage $T$, and substituting $x_{T+1}$ using the dynamics \cref{eq:reg_dynamics}, and moving constraints \cref{eq:reg_eq,eq:reg_ineq,eq:reg_ult_eq,eq:reg_ult_ineq} into the objective by means of infinite-valued indicator functions, observe that the objective of the minimization is equivalent to $Z_T^i$ \cref{eq:q_functions} as claimed. 

Now, for some other stage $t\in\mathbf{T}$, assuming $\pi_{t}^i(x_{t+1})$ can be expressed by \cref{eq:gn_problem_t}, it can be shown that $\pi_t^i(x_t)$ must also be equivalently expressed by \cref{eq:gn_problem_t}. \cref{eq:gn_problem_t} can be re-written as the following:

\begin{subequations}
\label{eq:gn_problem_t_nested}
\begin{alignat}{3}
    \pi_t^i(x_t) := \tilde{u}^i_t \in \underset{u_t^i }{\textrm{arg}}&\underset{\substack{u_{t}^i,\\ \tilde{u}_{t+1}^{-i},\\x_{t+1}}}{\textrm{min}} \Bigg\{ \ l_t^i(x_t,u_t^i,\tilde{u}_t^{-i}) +  &\underset{\substack{u_{t+1:T}^i,\\
    \tilde{u}^{-i}_{t+2:T},\\
    x_{t+2:T+1}}}{\textrm{min}}\ 
        &\mathrlap{\sum_{s=t+1}^T l_s^i(x_s,u_s^i, \tilde{u}_s^{-i}) + l_{T+1}^i(x_{T+1}) \Bigg\}} 
    \\
    &&\textrm{s.t.} \ \ \ 
      &0 = \tilde{u}^{-i}_s - \pi^{-i}_s(x_s),  &s\in\mathbf{T_{t+2}} \label{eq:maybe_ignore_policies} \\
    &&&0 = x_{s+1} - f_s(x_s,u_s^i, \tilde{u}_s^{-i}), \   &s\in\mathbf{T_{t+1}} \\
    &&& 0 = h_s^i(x_s,u_s^i, \tilde{u}_s^{-i}), &s\in\mathbf{T_{t+1}} \\
    &&& 0 \leq g_s^i(x_s,u_s^i, \tilde{u}_s^{-i}), &s\in\mathbf{T_{t+1}} \\
    &&& 0 = h^i_{T+1}(x_{T+1}) \\
    &&& 0 \leq g^i_{T+1}(x_{T+1}) \\
    &\textrm{s.t.} \ \ \ \mathrlap{0 = \tilde{u}_{t+1}^{-i} - \pi_{t+1}^{-i}(x_{t+1})} \label{eq:one_policy} \\
    &\ \ \ \ \ \ \ \mathrlap{0 = x_{t+1}-f(x_t,u_t^i,\tilde{u}_t^{-i})} \label{eq:one_dynamic} \\
    &\ \ \ \ \ \ \ \mathrlap{0 = h_t^i(x_t,u_t^i,\tilde{u}_t^{-i})}\label{eq:one_eq}\\
    &\ \ \ \ \ \ \ \mathrlap{0 \leq g_t^i(x_t,u_t^i,\tilde{u}_t^{-i})} \label{eq:one_ineq}
\end{alignat}
\end{subequations}

The nested minimum appearing in \cref{eq:gn_problem_t_nested} is exactly that appearing in \cref{eq:gn_problem_t} for stage $t+1$ (ignoring \cref{eq:maybe_ignore_policies} if $t+1=T$). Because the controls  $\tilde{u}^{-i}_{t+1}$ (for $t+1\leq T$) are constrained by the policies $\pi_{t+1}^{-i}(x_{t+1})$, the value of this nested minimization must 
% be equivalent to 
equal the value function $V_{t+1}(x_{t+1})$ as defined in \cref{eq:value_functions} for any minimizer $u_{t+1}^i$, regardless of whether or not the minimizer corresponds to the particular one corresponding to $\pi_{t+1}^i(x_{t+1})$. By substituting $x_{t+1}$ using the constraint \cref{eq:one_dynamic}, and using infinite-valued indicator functions to move \cref{eq:one_eq,eq:one_ineq} into the objective of \cref{eq:gn_problem_t_nested}, we see that the objective of the minimization is equivalent to \cref{eq:q_functions} for stage $t$, despite $u^i_{t+1:T}$ not being constrained by equilibrium policies. Thus, the alternate definition of $\pi_t^i(x_t)$ in \cref{eq:gn_problem_t} is equivalent to that in \cref{eq:policies} for all stages $t$. 
\end{proof}

Here we have defined the GFNE policies in terms of the \emph{nested} equilibrium problems with equilibrium constraints \cref{eq:gn_problem_t}. These equilibrium constraints arise in these problems because the constraints \cref{eq:policy_of_other_controls} are defined in terms of equilibrium problems. Critically though, the set of players in all inner-level equilibrium problems are exactly those in the outer-level problems, allowing for the removal of the redundant constraint that $u_s^i=\pi_s^i(x_s)$, $s\in\mathbf{T_{t+1}}$ from player $i$'s problem statement \cref{eq:gn_problem_t}, as demonstrated in \cref{thm:infeasible}. When the necessary conditions of all players are concatenated, the constraints $u_s^i=\pi_s^i(x_s)$, $s\in\mathbf{T_{t+1}}$ become redundant for \emph{all} $i$, as we show in \cref{thm:foncs}.  This fact will allow for a compact representation of necessary conditions associated with solutions of a GFNE, and ultimately algorithms for finding such solutions.

\begin{theorem}[Necessary Conditions]
\label{thm:foncs}
For some stage $t \in\mathbf{T}$, consider any set of policies $\pi_s^i$, $s\in\mathbf{T_t}$, $i\in\mathbf{N}$, as defined in \cref{eq:gn_problem_t}. Let the state $\hat{x}_t$ be such that a solution exists to equilibrium problem \cref{eq:gn_problem_t} at stage $t$. Denote the resultant sub-game solution trajectory by $\{x_s^*; \ s\in\mathbf{T_t^+}, \ x_t^*=\hat{x}_t\}$, $\{u_s^*; \ s\in\mathbf{T_t}\}$. Furthermore, assume the policies $\pi_s(x_s)$ are differentiable at the point $x_s^*$ for $s\in\mathbf{T_{t+1}}$, and a standard constraint qualification such as the linear independence constraint qualification holds for the optimization problem appearing in \cref{eq:gn_problem_t}, for each $i\in\mathbf{N}$. 
% Additionally assume that the gradient of the constraints \cref{eq:reg_dynamics}, $s\in\mathbf{T_t}$ and \crefrange{eq:reg_eq}{eq:reg_ult_ineq}, $i\in\mathbf{N}$, $s\in\mathbf{T_t}$ with respect to the shared state and joint-control variables are linearly independent along the solution. 
Then there exist multipliers $\{\lambda_s^i\in\mathbb{R}^n; \ s\in\mathbf{T_t}, \ i\in\mathbf{N}\}$, $\{\mu_s^i\in\mathbb{R}^{a^i_s}; \ s\in\mathbf{T_t^+}, \ i\in\mathbf{N}\}$, $\{\gamma_s^i\in\mathbb{R}^{b^i_s}; \ s\in\mathbf{T_t^+}, \ i\in\mathbf{N}\}$, and $\{\psi_s^i\in\mathbb{R}^{m^{-i}};\ s\in\mathbf{T_{t+1}}, \ i\in\mathbf{N}\}$ which satisfy\footnote{The notation $[\cdot]_{u_t}$ implies that if $u_t$ appears in the $j_1$ through $j_2$ indices of $z_t$, then the $j_1$ through $j_2$ rows of the matrix argument are selected. The notation in the first equation of \cref{eq:foncs} is used to indicate that the gradients of the functions $l_s^i(x_s,u_s)$, $f(x_s,u_s)$, $h_s^i(x_s,u_s)$, and $g_s^i(x_s,u_s)$ are evaluated at $x_s^*$ and $u_s^*$. The symbol $\perp$ is used to indicate complementarity of the left- and right-hand-side conditions.  For example, if the $k$-th element of $g_s^i(x_s^*,u_s^*) > 0$ then the $k$-th element of $\gamma_s^i$ must be $0$, and vice-versa. As before, for the final stage $t=T$, the set of conditions \cref{eq:foncs_x_tp1,eq:foncs_u_tp1} is empty, as the set $\mathbf{T_{T+1}} = \emptyset$.} :
% the following relationships:

\begin{subequations}
    \label{eq:foncs}
    \begin{alignat}{3}
     0 &= \nabla_{u_s^i} \bigg[ l_s^{i} + f_s^\intercal \lambda_{s}^i - h_s^{i\intercal} \mu_s^i  -  g_s^{i^\intercal} \gamma_s^i    &\bigg]_{x_s^*,u_s^*}, \ \ &i\in\mathbf{N}, \ s \in \mathbf{T_t} \\
     0 &= \nabla_{x_s} \bigg[ l_s^i - \lambda_{s-1}^i + f_s^\intercal \lambda_{s}^i - h_s^{i^\intercal} \mu_s^i  -  g_s^{i\intercal} \gamma_s^i  + \pi_s^{-i\intercal} \psi_s^i &\bigg]_{x_s^*,u_s^*},   \ \ &i\in\mathbf{N}, \ s\in\mathbf{T_{t+1}}, \label{eq:foncs_x_tp1}\\
     0 &= \nabla_{u^{-i}_s} \bigg[ l_s^i + f_s^\intercal \lambda_{s}^i - h_s^{i^\intercal} \mu_s^i  -  g_s^{i\intercal} \gamma_s^i  - \psi_s^i &\bigg]_{x_s^*,u_s^*},   \ \ &i\in\mathbf{N}, \ s\in\mathbf{T_{t+1}}, \label{eq:foncs_u_tp1}\\
     0 &= \nabla_{x_{T+1}} \bigg[ l_{T+1}^i - \lambda_{T}^i - h_{T+1}^{i^\intercal} \mu_{T+1}^i  -  g_{T+1}^{i\intercal} \gamma_{T+1}^i &\bigg]_{x_{T+1}^*},  \ \ &i\in\mathbf{N}, \\
    0 &= x^*_{s+1} - f_s(x^*_s,u^*_s), &&s\in\mathbf{T_t}, \\
    % 0 &= u^*_{s} - \pi_s(x^*_s), &&s\in\mathbf{T_{t+1}}, \label{eq:foncs_policies} \\
    0 &= h_s^i(x_s^*,u_s^*), &&i\in\mathbf{N}, \ s\in\mathbf{T_t},\\
    0 &\leq g_s^i(x_s^*,u_s^*) \perp \gamma_s^i \geq 0, &&i\in\mathbf{N}, \ s\in\mathbf{T_t}, \label{eq:foncs_ineq_t} \\
    0 &= h_{T+1}^i(x_{T+1}^*), &&i\in\mathbf{N},\\
    0 &\leq g_{T+1}^i(x_{T+1}^*) \perp \gamma_{T+1}^i \geq 0, &&i\in\mathbf{N}. \label{eq:foncs_ineq_T}
    \end{alignat}
\end{subequations}

\revise{When strict complementarity holds (w.r.t. \cref{eq:foncs_ineq_t,eq:foncs_ineq_T}), let $L_t(z_t,x_t^*)=0$ denote the entire set of conditions \cref{eq:foncs} formed by treating active inequalities \cref{eq:foncs_ineq_t,eq:foncs_ineq_T} as equalities, and ignoring the inactive inequalities.  Here, $z_t$ is the set of all variables appearing in \cref{eq:foncs} other than $x_t^*$ \emph{and} all multipliers $\gamma_s^i$ associated with inactive inequality constraints. For example, if at a solution to \cref{eq:foncs}, it is that $\gamma_{s,j}^i = 0$ for some $i\in\mathbf{N},s\in\mathbf{T_t},j\in\{1...b_s^i\}$, then $\gamma_{s,j}^i=0$ is included in the system of equations $L_t(z_t,x_t^*)=0$, where as the expression involving $g_s^i(x_s^*,u_s^*)$ is discarded.}  If the matrix $\nabla_{z_t}L_t$ is non-singular, then the policy $\pi_t(x_t)$ is also differentiable at the point $\hat{x}_t$, and $\nabla_x \pi_t(x_t) := -[[\nabla_{z_t}L_t]^{-1} \nabla_{x_t}L_t]_{u_t}$. 
\end{theorem}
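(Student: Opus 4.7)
The plan is to address the two claims in sequence: first, deriving the necessary conditions \cref{eq:foncs} as KKT conditions of the nested formulation \cref{eq:gn_problem_t} from \cref{thm:infeasible}; second, combining those conditions with the implicit function theorem to obtain differentiability of $\pi_t$ together with the claimed Jacobian expression.

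For the first part, I would, for each player $i \in \mathbf{N}$, form the Lagrangian of their optimization problem in \cref{eq:gn_problem_t}, attaching multipliers $\lambda_s^i$ to the dynamics constraints (with sign chosen so the Lagrangian contains $f_s^\intercal \lambda_s^i$), $\mu_s^i$ and $\gamma_s^i$ to the stage and terminal equality and inequality constraints, and $\psi_s^i$ to the policy constraints \cref{eq:policy_of_other_controls}. Under the assumed LICQ, the KKT conditions are necessary. Stationarity with respect to $u_s^i$ produces the first line of \cref{eq:foncs}; stationarity with respect to $\tilde{u}_s^{-i}$ for $s \in \mathbf{T_{t+1}}$ produces \cref{eq:foncs_u_tp1}; stationarity with respect to $x_s$ for $s \in \mathbf{T_{t+1}}$ picks up $-\lambda_{s-1}^i$ from the previous stage's dynamics and $+\nabla_{x_s}\pi_s^{-i\intercal}\psi_s^i$ from differentiating the policy constraint (valid by the assumed differentiability of $\pi_s^{-i}$ at $x_s^*$), giving \cref{eq:foncs_x_tp1}; and terminal stationarity yields the $x_{T+1}$ equation. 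Primal feasibility and complementary slackness complete the system. As observed in \cref{thm:infeasible}, the redundant constraint $u_t^i = \pi_t^i(x_t)$ is absent from each player's stage-$t$ problem, so no multiplier $\psi_t^i$ appears at the current stage.

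For the second part, strict complementarity at the solution means every active inequality has a strictly positive multiplier and every inactive one has a strictly positive slack. By continuity, both sets persist under any sufficiently small perturbation of $x_t$ about $\hat{x}_t$, so $L_t(z_t, x_t) = 0$ is precisely the local KKT system. Because $L_t$ has as many equations as $z_t$ has components and $\nabla_{z_t} L_t$ is by hypothesis non-singular at $(z_t^*, \hat{x}_t)$, the implicit function theorem supplies a $C^1$ solution map $z_t(x_t)$ in a neighborhood of $\hat{x}_t$. Implicit differentiation gives $\nabla_{x_t} z_t = -[\nabla_{z_t} L_t]^{-1} \nabla_{x_t} L_t$, and selecting the rows corresponding to $u_t$ yields the claimed expression for $\nabla_x \pi_t$.

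The subtlety I expect to require the most care is identifying the implicit-function branch $z_t(x_t)$ with the policy $\pi_t(x_t)$ itself rather than some other KKT stationary point. The policy values, wherever defined on a neighborhood of $\hat{x}_t$, must satisfy \cref{eq:foncs} by the first part, and the non-singularity of $\nabla_{z_t} L_t$ together with the frozen active set from strict complementarity guarantees the KKT system is locally injective near $z_t^*$. Therefore $\pi_t$ must coincide with the $u_t$-restriction of the IFT branch on some neighborhood of $\hat{x}_t$, giving both differentiability at $\hat{x}_t$ and the stated Jacobian formula.
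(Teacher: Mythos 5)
Your proposal is correct and follows essentially the same route as the paper: derive the KKT conditions of each player's nested problem \cref{eq:gn_problem_t} under the assumed constraint qualification, concatenate them across players (with the policy constraints and their multipliers handled exactly as in \cref{eq:foncs}), and then apply the implicit function theorem to the active-set-reduced system $L_t$ to obtain differentiability of $\pi_t$ and the stated Jacobian. Your final identification of the implicit-function branch with $\pi_t$ is the same step the paper makes via uniqueness of the implicitly defined function, so no substantive difference remains.
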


\begin{proof}
By the assumption that the optimization problems \cref{eq:gn_problem_t} satisfy a standard constraint qualification, \revise{the first-order necessary conditions for optimality (Karush-Kuhn-Tucker conditions) imply that there must exist multipliers associated with each player $i\in\mathbf{N}$'s optimization problem at stage $t$, such that both the conditions in \cref{eq:foncs} (evaluated for the particular $i\in\mathbf{N}$) and the constraints $u_s^{-i*}=\pi_s^{-i}(x_s)$ hold \cite[Thm. 12.1]{nocedal2006numerical}.} Concatenating these conditions for all of the $N$ players gives rise to the conditions \cref{eq:foncs}, with the addition of the constraints \cref{eq:policy_of_other_controls} for each $i\in\mathbf{N}$. Since there must exist multipliers satisfying those conditions, the conditions \cref{eq:foncs} must also be satisfied, as \cref{eq:foncs} are formed by simply removing the constraints $u_s^*=\pi_s(x_s)$. 

After removing all inactive inequality constraints and associated multipliers from \cref{eq:foncs}, it is straightforward to verify that the dimension of the system $L_t(z_t,x_t^*)$ and $z_t$ are the same, and therefore that $\nabla_{z_t}L_t$ is a square matrix. If $z_t^*$ is comprised of $x_{t+1:T+1}^*$, $u_{t:T}^*$ and multipliers satisfying \cref{eq:foncs} such that $L_t(z_t^*,x_t^*)=0$, then assuming this matrix is non-singular, the Implicit Function Theorem states that there must exist a unique function $\Pi_t(x_t)$ and open set $\mathcal{X}_t^*\subset \mathcal{X}$ containing $x_t^*$ such that $L_t(\Pi_t(x_t),x_t))=0$ for all $x_t\in\mathcal{X}_t^*$. By the uniqueness of this function, we must have that for all $x_t\in\mathcal{X}_t^*$, $[\Pi_t(x_t)]_{u_t} = \pi_t(x_t)$, where $[\Pi_t(x_t)]_{u_t}$ selects the components of the function $\Pi_t$ corresponding to the subset of $z_t$ containing $u_t$. Therefore, for all $x_t\in\mathcal{X}_t^*$, $\nabla_{x_t}\pi_t(x_t) = \nabla_{x_t}([\Pi_t(x_t)]_{u_t}) = -[[\nabla_{z_t}L_t]^{-1} \nabla_{x_t}L_t]_{u_t}$.
\end{proof}

Observe that the conditions $L_t(z_t,x_t)=0$ contain as a subset the conditions $L_{s}(z_{s},x_{s})=0$, $s\in\mathbf{T_{t+1}}$. If the matrices $\nabla_{x_s} L_s(z_s,x_s)$, $s\in\mathbf{T_{t+1}}$ are also non-singular, then in some neighborhood of $z_t^*$, the constraints $u_s^*=\pi_s(x_s^*)$, $s\in\mathbf{T_{t+1}}$ are equivalent to $L_s(z^*_s,x^*_s)=0$, motivating the removal of the constraints \cref{eq:policy_of_other_controls} from the conditions \cref{eq:foncs}. 

For games and corresponding solutions satisfying the stated assumptions, \cref{thm:foncs} suggests a method for computing those solutions. Evaluating the conditions \cref{eq:foncs} only requires the evaluation of the policy gradients $\nabla_{x_s}\pi_s(x_s)$, and not the policies themselves. 

The procedure we propose for computing GFNE trajectories is to find a solution to the conditions \cref{eq:foncs}, which can then be checked against a sufficiency condition to ensure that the solution indeed constitutes a GFNE. 

\begin{theorem}[Sufficient Conditions]
    \label{thm:sufficiency}
    Consider any set of states $\{x_s^*$, $s\in\mathbf{T^+_t}\}$ and controls $\{u_s^*$, $s\in\mathbf{T_t}\}$, which together with multipliers  $\{\lambda_s^i\in\mathbb{R}^n; \ s\in\mathbf{T_t}, \ i\in\mathbf{N}\}$, $\{\mu_s^i\in\mathbb{R}^{a^i_s}; \ s\in\mathbf{T_t^+}, \ i\in\mathbf{N}\}$, $\{\gamma_s^i\in\mathbb{R}^{b^i_s}; \ s\in\mathbf{T_t^+}, \ i\in\mathbf{N}\}$, and $\{\psi_s^i\in\mathbb{R}^{m^{-i}};\ s\in\mathbf{T_{t+1}}, \ i\in\mathbf{N}\}$ satisfy the conditions \cref{eq:foncs}, with the matrix $\nabla_{z_s}L_s$ non-singular for all $s\in\mathbf{T}$. If additionally, for all $i\in\mathbf{N}$,
    \begin{subequations}\label{eq:second_order}
    \begin{alignat}{4}
        & \ \  \mathrlap{\sum_{s=t}^T \begin{bmatrix} d_{x,s} \\ d_{u,s} \end{bmatrix}^\intercal \nabla_{x_s,u_s}^2 \mathcal{L}_s^{i*} \begin{bmatrix} d_{x,s} \\ d_{u,s} \end{bmatrix} + d_{x,T+1}^\intercal \nabla^2 l_{T+1}^i(x_{T+1}^*) d_{x,T+1} > 0,}  \\
        & \ \ \forall \{d_{x,s}, d_{u,s}\}: \ && 0 = d_{u_t}^{-i} - \nabla_x \pi_t^{-i}(x_t^*)d_{x,t}, \label{eq:suf_a} \\
        &&& 0 = d_{u_s} - \nabla_x \pi_s(x_s^*)d_{x,s},  &&s\in\mathbf{T_t}, \label{eq:suf_b} \\
        &&& 0 = d_{x,s+1} - \nabla_x f_s(x_s^*,u_s^*) d_{x,s} - \nabla_u f(x_s^*,u_s^*) d_{u_s}, \ \ &&s\in\mathbf{T_t} \label{eq:suf_c}
    \end{alignat}
    \end{subequations}
    then the trajectory $x_s^*$, $u_s^*$ constitutes a GFNE trajectory \revise{for the subgame originating from $x_t^*$ at stage $t$. In \cref{eq:second_order}, $\mathcal{L}_s^*$ is the stage-wise Lagrangian for player $i$, defined as the following:}
    \begin{equation}
    \revise{
        \mathcal{L}_s^* := \big[l_s^i - (\lambda_s^*)^\intercal (x_{s+1}^*-f_s) - (\mu_s^i)^\intercal h_s^i - (\gamma_s^i)^\intercal g_s^i - (\psi_s^i)^\intercal (u_s^* - \pi_s)\big]_{x_s^*,u_s^*}.}
    \end{equation}
\end{theorem}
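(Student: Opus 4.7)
The plan is to invoke the nested GNEP reformulation from \cref{thm:infeasible} and verify, for each stage $t\in\mathbf{T}$ and each player $i\in\mathbf{N}$, the classical second-order sufficient conditions (SOSC) for player $i$'s constrained optimization problem \cref{eq:gn_problem_t}. Once SOSC is established for every player at every stage, \cref{thm:infeasible} directly yields that the candidate trajectory constitutes a GFNE originating at $x_t^*$. The first-order KKT conditions for each per-player problem are already contained in \cref{eq:foncs}: extracting the block of equations associated with a single player $i$ recovers precisely the KKT system for \cref{eq:gn_problem_t}, with Lagrange multipliers $\lambda^i,\mu^i,\gamma^i,\psi^i$. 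The assumed non-singularity of $\nabla_{z_s}L_s$ for all $s\in\mathbf{T}$ activates the Implicit Function Theorem argument of \cref{thm:foncs}, supplying differentiability of $\pi_s$ and hence making the linearizations $\nabla_x\pi_s(x_s^*)$ meaningful.

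Next, I would assemble player $i$'s Lagrangian for \cref{eq:gn_problem_t} and observe that, up to a terminal cost and the terminal-stage constraint multiplier terms, it equals $\sum_{s=t}^{T}\mathcal{L}_s^*$ as defined in the theorem. Because the only cross-stage coupling, $-(\lambda_s^i)^\intercal x_{s+1}$, is linear in $x_{s+1}$, the Hessian of this Lagrangian with respect to the primal variables decomposes into a block-diagonal sum of the stage Hessians $\nabla^2_{x_s,u_s}\mathcal{L}_s^*$ together with a terminal block from $l_{T+1}^i$. Evaluating it on any direction $\{d_{x,s},d_{u,s}\}$ reproduces exactly the quadratic form appearing in \cref{eq:second_order}. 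The critical cone for player $i$'s problem is then described by the linearization of (i) the dynamics \cref{eq:reg_dynamics}, giving \cref{eq:suf_c}; (ii) the equality and active-inequality constraints \cref{eq:reg_eq,eq:reg_ineq,eq:reg_ult_eq,eq:reg_ult_ineq}, which are already absorbed into the stage-wise multiplier terms of $\mathcal{L}_s^*$; and (iii) the policy constraints \cref{eq:policy_of_other_controls}, producing the policy linearizations recorded in \cref{eq:suf_a,eq:suf_b}. With this identification, the classical SOSC result (see, e.g., \cite{nocedal2006numerical}) implies that the candidate trajectory is a strict local minimizer of \cref{eq:gn_problem_t}, and since this holds for every $i\in\mathbf{N}$ at every $t\in\mathbf{T}$, \cref{thm:infeasible} yields the GFNE property.

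The step I expect to be most delicate is the critical-cone identification, specifically justifying that \cref{eq:suf_b} constrains the full vector $d_{u_s}$ (including player $i$'s own components) rather than only the $-i$ block as one might naively expect from \cref{eq:policy_of_other_controls}. The resolution is that, once $\nabla_{z_s}L_s$ is non-singular for all $s$, the Implicit Function Theorem argument of \cref{thm:foncs} pins down the entire joint policy $\pi_s$ as a differentiable function of $x_s$ near the nominal trajectory. Consequently, any perturbation remaining consistent with the GFNE at stages $s\in\mathbf{T_{t+1}}$ must satisfy $d_{u_s}=\nabla_x\pi_s(x_s^*)d_{x,s}$ in every component, and first-order variations of player $i$'s own future controls away from this policy are already priced in through the coupling term $-(\psi_s^i)^\intercal(u_s^{-i}-\pi_s)$ inside $\mathcal{L}_s^*$. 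Once this correspondence between the player-$i$ critical cone and \cref{eq:suf_a,eq:suf_b,eq:suf_c} is made precise, the remainder of the argument reduces to routine bookkeeping on stage-wise Hessians and multipliers.
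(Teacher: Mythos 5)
Your overall route is the same as the paper's: reduce the claim to per-player second-order sufficient conditions for the nested problems \cref{eq:gn_problem_t} (via \cite[Theorem 12.6]{nocedal2006numerical}), using the stage-wise decomposition of each player's Lagrangian. However, two of your justifications do not hold up. The lesser issue is your claim that the set cut out by \cref{eq:suf_a,eq:suf_b,eq:suf_c} \emph{is} the critical cone because the constraints \cref{eq:reg_eq,eq:reg_ineq,eq:reg_ult_eq,eq:reg_ult_ineq} are ``already absorbed into the stage-wise multiplier terms'': a constraint's appearance in the Lagrangian does not remove its linearization from the critical cone. The paper only claims (and only needs) that the set in \cref{eq:second_order} is a superset of the critical cone with respect to those constraints, so positivity on it implies positivity on the cone; it even remarks afterwards that the condition is therefore stricter than necessary. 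Your misidentification happens to be harmless in the direction needed, but the stated reason is wrong.

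The genuine gap is in your treatment of the point you yourself flag as delicate. In player $i$'s problem \cref{eq:gn_problem_t} the future controls $u^i_s$, $s>t$, are not constrained by any policy, so the critical cone of that problem contains directions with $d_{u_s^i}\neq \nabla_x\pi^i_s(x_s^*)\,d_{x,s}$; such directions are excluded by \cref{eq:suf_b}, so the hypothesis \cref{eq:second_order} asserts nothing about the quadratic form there, and classical SOSC for the nested problem over its full critical cone does not follow the way you assert. Your proposed fix---that deviations of player $i$'s own future controls are ``priced in through the coupling term $-(\psi_s^i)^\intercal(u_s-\pi_s)$''---cannot work, because $\psi_s^i\in\mathbb{R}^{m^{-i}}$ is the multiplier of \cref{eq:policy_of_other_controls} and pairs only with the other players' control components; it carries no information about player $i$'s own controls. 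To close the argument one has to go back to the definition \cref{eq:policies}: positivity over policy-consistent directions is a curvature condition for the reduced problem in which player $i$ perturbs only $u^i_t$ while all subsequent play (its own included) follows the equilibrium policies, which is exactly what \cref{eq:policies} demands; the paper compresses this step into its one-line ``superset of the critical constraint cone'' assertion. As written, your proposal claims SOSC for the nested problem from a hypothesis that only controls policy-consistent directions, and the mechanism you invoke to bridge that mismatch is dimensionally impossible.
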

\begin{proof}
The set of $d_{x,s}$, $d_{u,s}$ satisfying \cref{eq:suf_a,eq:suf_b,eq:suf_c} is a super-set of the critical constraint cone \cite[Eq. 12.53]{nocedal2006numerical}, therefore the trajectory $x_s^*,u_s^*$ must constitute a true local minimum of the problems \cref{eq:gn_problem_t} for stage $t$ and each player $i\in\mathbf{N}$, as stated in \cite[Theorem 12.6]{nocedal2006numerical}. 
\end{proof}

The sufficiency condition outlined in \cref{thm:sufficiency} is stricter than necessary since it ignores other active constraints which reduce the volume of the critical constraint cone, and could be relaxed by considering the linearization of all active constraints.

Although \cref{thm:foncs,thm:sufficiency} together outline a procedure for computing GFNE trajectories, there remain some difficulties which must be addressed if such a procedure is to be practical. While the policies $\pi_s(x_s), s\in\mathbf{T_{t+1}}$ do not appear in the conditions \cref{eq:foncs}, the policy gradients $\nabla_x \pi_s(x_s)$ do, and their evaluation requires solving a system of equations which generally depend on second-order derivatives of all policies $\pi_r(x_r)$, $r\in\mathbf{T_{s+1}}$. Furthermore, evaluating second-order derivatives of any policy requires evaluating third-order derivatives of all subsequent policies, and so forth. The effect of this is that $T^{th}$-order derivatives of dynamic and constraint functions appearing at the late stages of the game must be computed to evaluate the conditions \cref{eq:foncs} when $t=1$. While technically possible, this requirement is impractical for many games. \revise{We therefore introduce a reasonable approximation to the equilibrium policies $\pi_t(\cdot)$, which we denote by $\hat{\pi}_t(\cdot)$. By ignoring all higher-order derivatives of later-game policies $\hat{\pi}_s(\cdot)$, $s\in\mathbf{T_{t+1}}$, these approximate policies are computed recursively in a computationally tractable manner. }

% To define these approximate policies, we make explicit a series of chained approximations. For the ultimate stage of the game $T$, there are no sub-game policies appearing in \cref{eq:gn_problem_t}, and as a result, no approximation is necess

% are amenable to computation are formed by ignoring all higher-order derivatives of  computation of policy gradients $\nabla_x\pi_t(x_t)$ which do not require the evaluation of any higher-order derivatives of policies $\pi_s(x_s)$, $s\in\mathbf{T_{t+1}}$.

\begin{definition}[Quasi-Equilibrium Policies]
\label{def:approx_policies}
\revise{We approximate $\pi_t(x_t)$ by  $\hat{\pi}_t(x_t)$, which for each $t\in\mathbf{T}$ is defined as producing controls $u_t^*$, such that there exist a sub-game trajectory and corresponding multipliers satisfying the following conditions:}

% For the ultimate and penultimate stages of the game, the approximate policies are defined as equivalent as the true equilibrium policies as defined in
% \cref{eq:policies,eq:gn_problem_t}, i.e. $\hat{\pi_t}\equiv \pi_t, \ t\in\{T-1,T\}$. The quasi-policy gradients for these policies, denoted as For all preceding stages of the game, $\hat{\pi}_t(x_t)$ is defined as the approximate solution to \cref{eq:gn_problem_t}, but with the dependence on sub-game policies  \cref{eq:policy_of_other_controls} replaced with 
% \begin{equation}
%     0 = \tilde{u}_s^{-i} - \hat{\pi}_s^{-i}(x_s), \ \ \ s\in\mathbf{T_{t+1}}.
% \end{equation}

% We approximate $\nabla_x \pi_t(x_t)$ by what is termed the policy quasi-gradient, $K_t$, defined implicitly by the following conditions: 

% The policy $\pi_T$ can be computed exactly, and importantly, the evaluation of the policy gradient $\nabla_{x_{T}}\pi_T(\cdot)$ can also be computed efficiently, in the sense that there are no sub-game policies appearing in the conditions \cref{eq:foncs} for $t=T$. 

\begin{equation}
    \label{eq:qfoncs}
    \begin{alignedat}{3}
     0 &= \nabla_{u_s^i} \bigg[ l_s^{i} + f_s^\intercal \lambda_{s}^i - h_s^{i\intercal} \mu_s^i  -  g_s^{i^\intercal} \gamma_s^i    &\bigg]_{x_s^*,u_s^*}, \ \ &i\in\mathbf{N}, \ s \in \mathbf{T_t} \\
     0 &= \nabla_{x_s} \bigg[ l_s^i - \lambda_{s-1}^i + f_s^\intercal \lambda_{s}^i - h_s^{i^\intercal} \mu_s^i  -  g_s^{i\intercal} \gamma_s^i \bigg]_{x_s^*,u_s^*} +&\tilde{\nabla}_{x_s}\hat{\pi}_s^{-i\intercal} \psi_s^,   \ \ &i\in\mathbf{N}, \ s\in\mathbf{T_{t+1}}, \\
     0 &= \nabla_{u^{-i}_s} \bigg[ l_s^i + f_s^\intercal \lambda_{s}^i - h_s^{i^\intercal} \mu_s^i  -  g_s^{i\intercal} \gamma_s^i  - \psi_s^i &\bigg]_{x_s^*,u_s^*},   \ \ &i\in\mathbf{N}, \ s\in\mathbf{T_{t+1}}, \\
     0 &= \nabla_{x_{T+1}} \bigg[ l_{T+1}^i - \lambda_{T}^i - h_{T+1}^{i^\intercal} \mu_{T+1}^i  -  g_{T+1}^{i\intercal} \gamma_{T+1}^i &\bigg]_{x_{T+1}^*},  \ \ &i\in\mathbf{N}, \\
    0 &= x^*_{s+1} - f_s(x^*_s,u^*_s), &&s\in\mathbf{T_t}, \\
    % 0 &= u^*_{s} - \pi_s(x^*_s), &&s\in\mathbf{T_{t+1}}, \label{eq:foncs_policies} \\
    0 &= h_s^i(x_s^*,u_s^*), &&i\in\mathbf{N}, \ s\in\mathbf{T_t},\\
    0 &\leq g_s^i(x_s^*,u_s^*) \perp \gamma_s^i \geq 0, &&i\in\mathbf{N}, \ s\in\mathbf{T_t}, \\
    0 &= h_{T+1}^i(x_{T+1}^*), &&i\in\mathbf{N},\\
    0 &\leq g_{T+1}^i(x_{T+1}^*) \perp \gamma_{T+1}^i \geq 0, &&i\in\mathbf{N}.
    \end{alignedat}
\end{equation}

The conditions \cref{eq:qfoncs} are nearly the same as \cref{eq:foncs}, \revise{although the policies $\pi_s(x_s)$ are replaced with the approximate policies $\hat{\pi}_s(x_s)$, and the gradients taken with respect to these approximate policies are replaced with what we denote as \emph{quasi-gradients}, $\tilde{\nabla}_{x_s} \hat{\pi}_s(x_s)$.} Letting $\hat{L}_t(z_t,x_t^*)$ represent the concatenation of active conditions in \cref{eq:qfoncs} (analogous to $L_t(z_t,x_t^*)$ in \cref{thm:foncs}), then the quasi-gradient of $\hat{\pi}_t(x_t)$ is defined as 
\begin{equation} \label{eq:ift_error}
    \tilde{\nabla}_{x_t}\hat{\pi}_t(x_t) := -[[\check{\nabla}_{z_t}\hat{L}_t]^{-1}\nabla_{x_t}\hat{L}_t]_{u_t},
\end{equation} \revise{where $\check{\nabla}_{z_t}\hat{L}_t$, is approximate the Jacobian of $\hat{L}_t$ formed by replacing any term of the form $\nabla_{x_s}[ \tilde{\nabla}_{x_s}\hat{\pi}_s(x_s)^{-i\intercal}\psi_s^i]$ (appearing in the true Jacobian $\nabla_{z_t}\hat{L}_t$) with the $0$ matrix.} If the matrix $\check{\nabla}_{z_t}\hat{L}_t$ is singular at some $(z_t,x_t)$, we say that the policy quasi-gradient does not exist at that point. 
\end{definition}

\revise{As with the definition of the true equilibrium policies \cref{eq:policies}, the policies $\hat{\pi}_t(x_t)$ are not true functions, in that there may be multiple solutions to \cref{eq:qfoncs}. Nevertheless, we treat these policies as mapping to one arbitrarily chosen yet particular solution. So long as $\check{\nabla}_{z_t}\hat{L}_t$ is non-singular, the solutions to \cref{eq:qfoncs} are isolated, and a quasi-gradient can be formed around any such solution. This resultant quasi-gradient can then be used to define the conditions \cref{eq:qfoncs} at the preceding stage $t-1$.}

The approximate policies $\hat{\pi}_t(x_t)$ and corresponding quasi-gradients can be evaluated without the need for computing any third- or higher-order derivatives of any constraint or objective terms of the game. Solutions satisfying the conditions \cref{eq:qfoncs} will not satisfy the conditions \cref{eq:foncs} in general. Rather, the solutions to \cref{eq:qfoncs} will be distinct from solutions to \cref{eq:foncs}, and therefore we introduce the notion of a Generalized Feedback Quasi-Nash Equilibrium (GFQNE) to characterize these solutions. 

\begin{definition}[GFQNE]
Let $\{x_s^*; \ s\in\mathbf{T_t^+}, \ x_t^*=\hat{x}_t\}$, $\{u_s^*; \ s\in\mathbf{T_t}\}$, $\{\lambda_s^i\in\mathbb{R}^n; \ s\in\mathbf{T_t}, \ i\in\mathbf{N}\}$, $\{\mu_s^i\in\mathbb{R}^{a^i_s}; \ s\in\mathbf{T_t^+}, \ i\in\mathbf{N}\}$, $\{\gamma_s^i\in\mathbb{R}^{b^i_s}; \ s\in\mathbf{T_t^+}, \ i\in\mathbf{N}\}$, and $\{\psi_s^i\in\mathbb{R}^{m^{-i}};\ s\in\mathbf{T_{t+1}}, \ i\in\mathbf{N}\}$ be such that the conditions \cref{eq:qfoncs} are satisfied. 
% Furthermore, let  
% \begin{equation}
% \label{eq:sufficient_quasi}
% \begin{alignedat}{4}
%     & \mathrlap{\sum_{s=t}^T \begin{bmatrix} d_{x,s} \\ d_{u,s} \end{bmatrix}^\intercal \nabla^2 l_s^i(x^*_s,u^*_s) \begin{bmatrix} d_{x,s} \\ d_{u,s} \end{bmatrix} + d_{x,T+1}^\intercal \nabla^2 l_{T+1}^i(x_{T+1}^*) d_{x,T+1} > 0,} \\
%     &\forall \{d_{x,s}, d_{u,s}\} \ \text{\emph{s.t. }} && 0 = d_{u_s} -  K_s d_{x,s},  &s\in\mathbf{T_t}, \\
%     &&& 0 = d_{x,s+1} - \nabla_x f_s(x_s^*,u_s^*) d_{x,s} - \nabla_u f(x_s^*,u_s^*) d_{u_s}, \ \ &s\in\mathbf{T_t} ,
% \end{alignedat}
% \end{equation}
 Then we say that the trajectory $\{x_s^*; \ s\in\mathbf{T_t^+}, \ x_t^* = \hat{x}_t\}, \{u_s^*; \ s\in\mathbf{T_t}\}$ constitutes a Generalized Feedback Quasi-Nash Equilibrium (GFQNE) trajectory for the sub-game originating from $\hat{x}_t$ at stage $t$. 
%  The condition \cref{eq:sufficient_quasi} differs from the condition in \cref{thm:sufficiency} in the definition of the critical cone. 
\end{definition}

\revise{If all cost functionals \cref{eq:cost_functional} in the game are convex, and all dynamic \cref{eq:dynamics} and non-dynamic \crefrange{eq:equality_constraints}{eq:inequality_constraints} constraints are linear, then the true equilibrium policies \cref{eq:gn_problem_t} are piecewise linear, implying that $\tilde{\nabla}_{x_s}\hat{\pi}_s(x_s) \equiv \nabla_{x_s}\hat{\pi}_s(x_s)$, and $\check{\nabla}_{z_t}\hat{L}_t \equiv \nabla_{z_t}\hat{L}_t$. Therefore \cref{eq:qfoncs} are equivalent to \cref{eq:foncs}, and by the convexity of the cost functionals, any solution to \cref{eq:qfoncs} also satisfies \cref{eq:second_order}, implying that all GFQNE are GFNE in the convex-linear setting. }

\revise{In the general setting, in addition to not requiring GFQNE solutions to satisfy any second-order conditions analogous to \cref{eq:second_order}, the policy quasi-gradients will in general not equal the policy gradients. The difference between the GFQNE and GFNE trajectories can be understood in terms of two approximations introduced at each stage of the game, the effects of which are propagated over the game horizon $T$.}

\revise{The first approximation is the difference between the true gradient of a policy (either $\nabla \hat{\pi}_t$ or $\nabla \pi_t$) and the quasi-gradient which can be computed ($\tilde{\nabla}\hat{\pi}_t$ and $\tilde{\nabla}\pi_t$, respectively). Notice that for the ultimate stage of the game $t=T$, there are no sub-game policies to consider, and therefore the the set of possible equilibrium policies $\pi_T$ is a subset of the set of possible approximate equilibrium policies $\hat{\pi}_T$.  Furthermore, whenever $\pi_T \equiv \hat{\pi}_T$, it follows that $\tilde{\nabla} \hat{\pi}_T \equiv \nabla\hat{\pi}_T$.}

\revise{
For the penultimate stage $t=T-1$, the necessary conditions \cref{eq:qfoncs} are equivalent to \cref{eq:foncs}, which again implies that the set of possible equilibrium policies $\pi_{T-1}$ is a subset of the set of possible approximate equilibrium policies $\hat{\pi}_{T-1}$. However, even when $\pi_{T-1} \equiv \hat{\pi}_{T-1}$, $\tilde{\nabla} \hat{\pi}_{T-1} \neq \nabla\hat{\pi}_{T-1} \equiv \nabla\pi_{T-1}$. This is true not only for the penultimate stage, but for all stages $t<T-1$.  The error between the true and approximate policy gradients can be quantified by observing that, for any square matrices $A, B$ of compatible dimension such that $A$, $A+B$ are non-singular $\cite{henderson1981deriving}$:
\begin{equation}
    (A+B)^{-1} = A^{-1} - A^{-1}B(A+B)^{-1}
\end{equation}
where, in the context of \cref{eq:ift_error}, $(A+B)$ and $A$ correspond to ${\nabla}_{z_t}\hat{L}_t$ and $\check{\nabla}_{z_t}\hat{L}_t$, respectively.\footnote{\revise{The usual interpretation of $A+B$ in this setting would be for the perturbed or approximate matrix, although the reversed interpretation we assign is also valid.}} In this interpretation, $B$ is a matrix comprised of terms of the form $\nabla_{x_s}[\tilde{\nabla}_{x_s}\hat{\pi}_s(x_s)^{-i\intercal}\psi_s^i]$ appearing in the locations they do in $\nabla_{z_t}\hat{L}_t$, and zero elsewhere. By direct substitution, right-multiplying both sides by $\nabla_{x_t}\hat{L}_t$, bounding the norm of ``$B$,'' and applying the triangle inequality, it follows that 
\begin{equation}\label{eq:quasigraderror}
    \frac{\| \tilde{\nabla}_{x_t}\hat{\pi}_t(x_t) - \nabla_{x_t}\hat{\pi}_t(x_t) \|_2}{\|{\nabla}_{x_t}\hat{\pi}_t(x_t)\|_2} \leq  \epsilon \cdot N(T-t) \cdot \|\check{\nabla}_{z_t}\hat{L}_t^{-1}\|_2  ,
\end{equation}
where 
\begin{equation}
    \epsilon := \max_{s\in\mathbf{T_{t+1}},i\in\mathbf{N}} \| \nabla_{x_s}[\tilde{\nabla}_{x_s}\hat{\pi}_s(x_s)^{-i\intercal}\psi_s^i] \|_2.
\end{equation}
Analogous results for other choices of matrix norms are readily made. The inequality \cref{eq:quasigraderror} bounds the relative error of the policy gradient in terms of the second-order derivative terms of sub-game policies, and the approximate Jacobian $\check{\nabla}\hat{L}_t$. For highly non-linear policies $\hat{\pi}_s$, $\epsilon$ may be large, and the bound \cref{eq:quasigraderror} may be uninformative. 
}

\revise{
The second source of approximation first enters at stage $t=T-2$, and is reintroduced at every preceding stage. Given sub-game policies $\hat{\pi}_s,\ s\in\mathbf{T_{t+1}}$, each $\hat{\pi}_t$ would ideally evaluate to an equilibrium solution to problems analogous to \cref{eq:gn_problem_t}, though with the sub-game policies appearing in \cref{eq:policy_of_other_controls} replaced with the approximate sub-game policies $\hat{\pi}_s(x_s)$. We denote such policies as the idealized approximate policies. Instead, $\hat{\pi}_t$ is characterized as a solution to the approximate necessary conditions \cref{eq:qfoncs}, where all gradients of $\hat{\pi}_s$ are replaced with quasi-gradients. In general, the approximation error in $\tilde{\nabla}\hat{\pi}_{s}, s\in\mathbf{T_{t+1}}$ can greatly influence the resultant difference between solutions generated by $\hat{\pi}_t$ and the idealized policies, making it difficult (or potentially impossible) to bound the difference for non-infinitesimal errors.}

\revise{It is important to emphasize that these two forms of approximation are recursively made at each stage of the game. The difference between the true and approximate equilibrium policies will, in general, become increasingly pronounced when computing backwards from the ultimate stage $T$. Therefore it is likely that the GFNE and GFQNE trajectories are considerably different. Despite this,  it is simply intractable to compute GFNE trajectories for non-trivial games, and we believe GFQNE trajectories are the closest computationally tractable approximation possible. 
}

% Using Newton-type methods to solve for solutions to \cref{eq:qfoncs}, we will ideally be able to evaluate $[\nabla_{z_1}\hat{L}_1]^{-1}$ exactly, without treating $K_s,\ s\in\mathbf{T_2}$ as constants (since indeed, they depend on $z_1$). If we are unwilling or unable to compute derivatives of sub-game policy quasi-gradients, we will be forced to use a quasi-Newton method at best. Because we are working in the game setting, and the matrix $\nabla_{z_t}\hat{L}_t$ will, in general, be asymmetric, it is difficult to provide guarantees that such a quasi-Newton method will converge. Nevertheless, we find that in practice, such an approach does in fact converge and is useful in interesting settings. 

\subsubsection*{Note on constraint dimension}
So far we have also made an important, limiting assumption, which is that the matrices $\nabla_{z_t}L_t$ and $\check{\nabla}_{z_t}\hat{L}_t$ are non-singular for all $t\in\mathbf{T}$. For many common forms of constraints \cref{eq:equality_constraints}, \cref{eq:inequality_constraints}, this assumption cannot hold. This will occur, for example, when there is a terminal constraint on the entire game state, such as $h_{T+1}^i(x_{T+1}) := x_{T+1}$. If $m_t^i < n$, then the matrices $\check{\nabla}_{z_t}\hat{L}_t$ necessarily must be singular. Since many games involve constraints of this form, we handle them in the following way. 

If at any stage $t$, the matrix $\check{\nabla}_{z_t}\hat{L}_t$ is found to be singular, but the game is otherwise well-posed,\footnote{For example, the quadratic cost functionals of every player have sufficient curvature in the tangent cone of the game.} then this is likely due to an over-constrained sub-game. In this situation, we can combine the stage $t$ with the preceding stage $t-1$, and define new combined-stage dynamics and constraint functions accordingly. For example, assume at stage $t$ the matrix $\check{\nabla}_{z_t}\hat{L}_t$ is singular. We then define $\hat{u}_{t-1} := [u_{t-1}^\intercal \ u_t^\intercal]^\intercal$, $\hat{\mathcal{U}}_{t-1}^i := \mathcal{U}_{t-1}^i\times\mathcal{U}_t^i$, and the updated dynamic, constraint, and stage-wise cost functionals as
\begin{subequations}
\begin{align}
    \hat{f}_{t-1}(x_{t-1},\hat{u}_{t-1}) &:= f_t(f_{t-1}(x_{t-1},u_{t-1}),u_t), \\
    \hat{g}^i_{t-1}(x_{t-1},\hat{u}_{t-1}) &:= [g^i_{t-1}(x_{t-1},u_{t-1})^\intercal \ g^i_{t}(f(x_{t-1},u_{t-1}),u_t)^\intercal]^\intercal, \\
    \hat{h}^i_{t-1}(x_{t-1},\hat{u}_{t-1}) &:= [h^i_{t-1}(x_{t-1},u_{t-1})^\intercal \ h^i_{t}(f(x_{t-1},u_{t-1}),u_t)^\intercal]^\intercal, \\
    \hat{l}_{t-1}^i(x_{t-1},\hat{u}_{t-1}) &:= l_{t-1}^i(x_{t-1},u_{t-1}) + l_t^i(f(x_{t-1},u_{t-1}),u_t).
\end{align}
\end{subequations}  In this procedure, we effectively reduce the number of stages of the game by $1$, but the dimension of all controls input to the game and the cost and constraints imposed upon each player are unchanged. 

Throughout the remainder of this paper, we will assume that game stages are combined as necessary to ensure the subgame policy quasi-gradients are well-defined, and the game horizon $T$, dynamics, constraints, and cost-functionals all reflect any such modifications. In what follows we focus on the derivation of numerical methods for computing Generalized Feedback Quasi-Nash Equilibria. We begin our presentation by considering a special-case, which will serve as a building block for more general methods.

\section{Equality-Constrained LQ Games}
\label{sec:equality_lq}
We consider the case in which the dynamics equation describing the game evolution \cref{eq:dynamics} is linear in its arguments, the  cost-functionals \cref{eq:cost_functional} for each player are quadratic functions of the state and control variables, and each player is subject only to linear equality constraints.

In particular, let 

\begin{equation}
    \label{eq:linear_dynamics}
    x_{t+1} = A_t x_t + B_t^1 u_t^1 + ... + B_t^N u_t^N + c_t, \ \ t\in\mathbf{T},
\end{equation}
for (time-varying) matrices $A_t\in\mathbb{R}^{n\times n}$, $B_t^i \in \mathbb{R}^{m_t^i\times n}$, and vectors $c_t\in\mathbb{R}^n$. Associated with the dynamic constraints are multipliers $\lambda_t^i$ for each player $i\in\mathbf{N}$. Let 
\begin{equation}
\begin{aligned}
B_t &:= \begin{bmatrix} B_t^1&...&B_t^N\end{bmatrix}, \ \ \ \ \hat{B}_t := \begin{bmatrix} B_t^1 \\ & \ddots \\ & & B_t^N \end{bmatrix}, \ \ \ \ \lambda_t := \begin{bmatrix} \lambda_t^1 \\ \vdots \\ \lambda_t^N
\end{bmatrix}, \\
\tilde{B}_t &:= \begin{bmatrix} B_t^2 & \hdots & B_t^N \\
& & & \ddots \\
& & & & B_t^1 & \hdots & B_t^{N-1}
\end{bmatrix}.
\end{aligned}
\end{equation}

In this setting, the cost functionals for each player can be expressed as:
\begin{equation}
    \label{eq:quadratic_cost_functional}
    \begin{alignedat}{1}
    l^i_t(x_t,u_t) &:= \frac{1}{2}\bigg( \begin{bmatrix} x_t\\u_t\end{bmatrix}^\intercal \begin{bmatrix} Q_t^i & S_t^{i\intercal} \\ S_t^i & R_t^i\end{bmatrix} \begin{bmatrix} x_t\\u_t\end{bmatrix} + 2\begin{bmatrix} x_t\\u_t\end{bmatrix}^\intercal \begin{bmatrix} q_t^i \\ r_t^i \end{bmatrix} \bigg),  \\
    l^i_{T+1}(x_{T+1}) &:=\frac{1}{2} \bigg(x_{T+1}^\intercal Q_{T+1}^i x_{T+1} + 2x_{T+1}^\intercal q_{T+1}^i\bigg),
    \end{alignedat}
\end{equation}
for (time-varying) matrices $Q_t^i \in \mathbb{R}^{n\times n}$, $S_t^i \in \mathbb{R}^{m_t\times n}$, $R_t^i\in\mathbb{R}^{m_t\times m}$, and vectors $q_t^i\in\mathbb{R}^n$, $r_t^i\in\mathbb{R}^{m_t}$. For notational purposes, let the terms $R_t^i$, $S_t^i$, and $r_t^i$ be comprised of sub-matrices, $R_t^{i,j,k}\in\mathbb{R}^{m_t^j\times m_t^k}$, $S_t^{i,j}\in\mathbb{R}^{m_t^j\times n}$, and sub-vectors $r_t^{i,j}\in\mathbb{R}^{m_t^j}$, for $j,k\in\mathbf{N}$: 
\begin{equation}
\begin{aligned}
    &R_t^i := \begin{bmatrix} R_t^{i,1,1} & \hdots & R_t^{i,1,N}\\ \vdots & \ddots & \vdots \\ R_t^{i,N,1} & \hdots & R_t^{i,N,N} \end{bmatrix}, \ \ \ \  S_t^i := \begin{bmatrix} S_t^{i,1} \\ \vdots \\ S_t^{i,N}\end{bmatrix}, \ \ \ \ r_t^i := \begin{bmatrix} r_t^{i,1} \\ \vdots \\ r_t^{i,N} \end{bmatrix}
    \end{aligned}
\end{equation}
We additionally make use of the following matrix terms for brevity, which combine components from the cost functionals of all players:
\begin{equation}
\begin{alignedat}{5}
    &R_t := \begin{bmatrix} R_t^{1,1,1} & \hdots & R_t^{1,1,N} \\
    R_t^{2,2,1} & \hdots & R_t^{2,2,N} \\
    \vdots & \ddots & \vdots \\
    R_t^{N,N,1} & \hdots & R_t^{N,N,N}
    \end{bmatrix}, \ \ \ \ 
    &&S_{x_t} := \begin{bmatrix} S_t^{1,1} \\S_t^{2,2} \\ \vdots \\ S_t^{N,N} \end{bmatrix}, \ \ \ \ \ \ \ \ 
    &&r_t := \begin{bmatrix} r_t^{1,1} \\ r_t^{2,2} \\ \vdots \\ r_t^{N,N} \end{bmatrix}, \\
    &Q_t := \begin{bmatrix} Q_t^1 \\ \vdots \\ Q_t^N \end{bmatrix}, \ \ \ \ &&S_{u_t} := \begin{bmatrix} S_t^{1\intercal} \\ \vdots \\ S_t^{N\intercal}  \end{bmatrix}, \ \ \ \ \ \  \ \ &&q_t := \begin{bmatrix} q_t^1 \\ \vdots \\ q_t^N \end{bmatrix}, \\
    &\tilde{R}_t := \mathrlap{\begin{bmatrix} R_t^{1,2,1} & \hdots & R_t^{1,2,N} \\
     \vdots & \ddots & \vdots \\
     R_t^{1,N,1} & \hdots & R_t^{1,N,N} \\
     & & & \ddots \\ 
     & & & & R_t^{N,1,1} & \hdots & R_t^{N,1,N} \\
     & & & & \vdots & \ddots & \vdots \\
     & & & & R_t^{N,N-1,1} & \hdots & R_t^{N,N-1,N} \end{bmatrix}, 
     \tilde{S}_{x_t} := \begin{bmatrix} S_t^{1,2} \\ \vdots \\ S_t^{1,N} \\ S_t^{2,1} \\ S_t^{2,3} \\ \vdots \\ S_t^{2,N} \\ \vdots \\ S_t^{N,N-1} \end{bmatrix} }
\end{alignedat}
\end{equation}
% Further, let 
% Further let $R_t^{i,-i}:=[R_t^{i,i,1},...,R_t^{i,i,i-1}, R_t^{i,i,i+1},...,R_t^{i,i,N}]$. 
We impose the regularity assumptions 
\begin{equation} 
R_t^{i,i,i} \succ 0, \ \  R_t^i \succeq 0, \ \ Q_t^i \succeq 0
\end{equation} 
to ensure that the objective of each player is strictly convex. These conditions are sufficient for any solution to the conditions \cref{eq:foncs} to constitute a GFNE, as stated in \cref{thm:sufficiency}, but not necessary.

The constraints imposed upon each player take the form
\begin{equation}
    \label{eq:linear_eq_constraints}
    \begin{aligned}
    0 &= H_{x_t}^i x_t + H_{u_t^1}^i u_t^1 + ... + H_{u_t^N}^i u_t^N + h_t^i, \ \ t\in\mathbf{T} \\
    0 &= H_{x_{T+1}}^i x_{T+1} + h_{T+1}^i,
    \end{aligned}
\end{equation}
for matrices $H_{x_t}^i \in \mathbb{R}^{a_t^i\times n}$, $H_{u_t^j}^i \in \mathbb{R}^{a_t^i\times m_t^j}$, and vectors $h_t^i\in\mathbb{R}^{a_t^i}$, where $a_t^i$ is the dimension of the equality constraint imposed on player $i$ at stage $t\in\mathbf{T^+}$. As in \cref{sec:formulation}, we associate multipliers $\mu_{t}^i \in\mathbb{R}^{a_t^i}$, $t\in\mathbf{T^+}$ with these constraints for each player $i\in\mathbf{N}$.  
Let 
\begin{equation}
\begin{alignedat}{3}
H_{u_t} &:= \begin{bmatrix} H_{u_t^1}^1 & \hdots & H_{u_t^N}^1 \\ \vdots & \ddots & \vdots \\ H_{u_t^1}^N & \hdots & H_{u_t^N}^N \end{bmatrix}, \ \ &&\hat{H}_{u_t} := \begin{bmatrix} H_{u_t^1}^1 \\ & \ddots \\ & & H_{u_t^N}^N \end{bmatrix}, \\
H_{x_t} &:= \begin{bmatrix} H_{x_t}^1 \\ \vdots \\ H_{x_t}^N \end{bmatrix},  \ \ \ &&\hat{H}_{x_t}:= \begin{bmatrix} H_{x_t}^1 \\ & \ddots \\ & & H_{x_t}^N \end{bmatrix}, \\
h_t^\intercal &:=\begin{bmatrix} (h_t^{1})^\intercal & \hdots & (h_t^N)^\intercal \end{bmatrix}, &&\mu_t^\intercal := \begin{bmatrix} (\mu_t^1)^\intercal & \hdots & (\mu_t^N)^\intercal \end{bmatrix}, \\
\tilde{H}_{u_t} &:= \begin{bmatrix} H_{u_t^2}^1 & \hdots & H_{u_t^N}^1 \\ & & & \ddots \\
& & & & H_{u_t^1}^N & \hdots & H_{u_t^{N-1}}^N
\end{bmatrix}
\end{alignedat}
\end{equation}

Due to the linearity of all dynamic and non-dynamic constraint functions appearing in the game, and the quadratic cost functionals, the solutions of the conditions \cref{eq:foncs} and \cref{eq:qfoncs} will be identical, as stated in \cref{sec:formulation}. Therefore we will use the terms $K_t$ and $\nabla_{x_t}\pi_t(x_t)$ interchangeably in this section.

Using the above-defined dynamic, constraint and cost terms, we are able to proceed with development of numerical methods for computing GFNE solutions to this game. Instead of taking a dynamic programming perspective as is, for example, taken in the classic derivation of Feedback Nash Equilibria for unconstrained LQ games in \cite[Chapter 6]{basar1998dynamic}, we derive our method using what we refer to as a \emph{dynamic matrix factorization}. The primary idea behind this derivation is simply that the computation used to evaluate $K_{t+1}$ can be reused to compute $K_t$ efficiently. 

To start, the conditions \cref{eq:qfoncs} for stage $t=T$ can be expressed in terms of the following matrix system:
\begin{equation}
    \label{eq:lq_T}
    \begin{bmatrix}
    R_T & -\hat{H}_{u_T}^\intercal & \hat{B}_T^\intercal &  \\
    H_{u_T} \\
    -B_T & & & I_n \\
    & & -I_{N*n} & Q_{T+1} & \hat{H}_{x_{T+1}}^\intercal \\
    & & &H_{x_{T+1}}
    \end{bmatrix} \begin{bmatrix} u_T \\ \mu_T \\ \lambda_{T} \\ x_{T+1} \\ \mu_{T+1} \end{bmatrix} + \begin{bmatrix} S_{x_t} \\ H_{x_T} \\ -A_T \\ 0 \\ 0 \end{bmatrix} x_T + \begin{bmatrix} r_t \\ h_T \\ -c_T \\ q_{T+1} \\ h_{T+1} \end{bmatrix} = 0
\end{equation}

In \cref{eq:lq_T}, the matrices $I_\Box$ denote the $\Box\times \Box$-dimensional identity matrix.  Letting the system in \cref{eq:lq_T} be denoted in shorthand as 
\begin{equation} 
     M_T z_T + N_T x_T + n_T = 0,
\end{equation} where $z_T := [u_T^\intercal \ \mu_T^\intercal \ \lambda_T^\intercal \ x_{T+1}^\intercal \ \mu_{T+1}^\intercal]^\intercal$, we have that 
\begin{equation} 
\begin{alignedat}{3}
\pi(x_T) &:= K_T x_T + k_T, \\
\lambda_T &:= K_{\lambda_T} x_T + k_{\lambda_T}, \\
\mu_T &:= K_{\mu_T} x_T + k_{\mu_T}, \\
\mu_{T+1} &:= K_{\mu_{T+1}} x_T + k_{\mu_{T+1}}, \\
K_T &:= -[M_T^{-1}N_T]_{u_T}, \ \  && k_T := -[M_T^{-1}n_T]_{u_T}, \\
K_{\lambda_T} &:= -[M_T^{-1}N_T]_{\lambda_T}, \ \ && k_{\lambda_T} := -[M_T^{-1}n_T]_{\lambda_T}, \\
K_{\mu_T} &:= -[M_T^{-1}N_T]_{\mu_T}, \ \ && k_{\mu_T} := -[M_T^{-1}n_T]_{\mu_T}, \\
K_{\mu_{T+1}} &:= -[M_{T}^{-1}N_T]_{\mu_{T+1}}, \ \ && k_{\mu_{T+1}} := -[M_T^{-1}n_T]_{\mu_{T+1}},
\end{alignedat}
\end{equation}
For any stage $t$, we also make use of the matrix
%\begin{equation}
%    \begin{aligned}
%    \Pi_t := \begin{bmatrix} K_t^2 \\ \vdots \\ K_t^N \\ & K_t^1 \\ & K_t^3 \\ & \vdots \\ & K_t^N \\ & & \ddots \\ & & & K_t^1 \\ & & & \vdots \\ & & & K_t^{N-1}
%    \end{bmatrix}
%    \end{aligned}
%\end{equation}
\begin{equation}
    \begin{aligned}
    \Pi_t^\intercal := \begin{bmatrix} K_t^{2\intercal} & \cdots & K_t^{N\intercal} \\ & & & K_t^{1\intercal} & K_t^{3\intercal} & \cdots & K_t^{N\intercal} \\ & & & & & & & \ddots \\ & & & & & & & & K_t^{1\intercal} & \cdots & K_t^{(N-1)\intercal}
    \end{bmatrix}^\intercal
    \end{aligned}
\end{equation}

% Now, for all other stages $t < T$, define 
% \begin{equation} 
% z_t := \begin{bmatrix} u_t \\ x_{t+1} \\ \lambda_t \\ \mu_t \\ z_{t+1} \end{bmatrix}.
% \end{equation}

Notice that if we denote the conditions \cref{eq:qfoncs} for some stage $t+1$ as 
\begin{equation}
    M_{t+1} z_{t+1} + N_{t+1} x_{t+1} + n_{t+1} = 0,
\end{equation}
as we did for $t+1=T$, then we have also that the conditions \cref{eq:qfoncs} for stage $t$ can be denoted as 
\begin{equation}
    M_{t} z_{t} + N_{t} x_{t} + n_{t} = 0,
\end{equation}
where $z_t := [u_t^\intercal \ \mu_t^\intercal \ \lambda_t^\intercal \ \psi_{t+1}^\intercal \ x_{t+1}^\intercal \ z_{t+1}^\intercal]^\intercal$, and $M_t$, $N_t$, and $n_t$ are defined as:
\begin{equation}
    \label{eq:matrix_sys_t}
    \begin{aligned}
    M_t &:= \begin{bmatrix} \multicolumn{1}{c}{D_t^{1}} & D_t^{2} \\ \begin{bmatrix} 0 & N_{t+1} \end{bmatrix} & M_{t+1} \end{bmatrix} \\
    D_t^{1} &:= \begin{bmatrix}
    R_t & -\hat{H}_{u_t}^\intercal & \hat{B}_t^\intercal & \\
    H_{u_t} \\
    -B_{t} & & & & I_n \\
    & & -I_{N*n} & \Pi_{t+1}^\intercal & Q_{t+1} \\ 
    & & & -I_{(N-1)*m_{t+1}} & \tilde{S}_{x_{t+1}} 
    \end{bmatrix}, \\
    D_t^{2} &:=\begin{bmatrix}
    & \\
    & \\
    S_{u_{t+1}} & -\hat{H}_{x_{t+1}}^\intercal & \hat{A}_{t+1}^\intercal & & & & & & & & & \\ 
    \tilde{R}_{t+1} & -\tilde{H}_{u_{t+1}}^\intercal & \tilde{B}_{t+1}^\intercal & & & & & &  \\
    \end{bmatrix}, \\
    N_t^\intercal &:= \begin{bmatrix} S_{x_{t}}^\intercal &
    H_{x_{t}}^\intercal &
    -A_{t}^\intercal &  & \ \end{bmatrix}, \ \  n_t^\intercal := \begin{bmatrix} r_t^\intercal & h_t^\intercal & -c_t^\intercal  & 0 & 0 & n_{t+1}^\intercal \end{bmatrix}. 
    \end{aligned}
\end{equation}
From this form, we have as before, that 
\begin{equation} 
\label{eq:policies_lq_t}
\begin{alignedat}{3}
\pi(x_t) &:= K_t x_t + k_t, \\
\lambda_t &:= K_{\lambda_t} x_t + k_{\lambda_t}, \\
\mu_t &:= K_{\mu_t} x_t + k_{\mu_t}, \\
\psi_{t+1} &:= K_{\psi_{t+1}} x_t + k_{\psi_{t+1}}, \\
K_t &:= -[M_t^{-1}N_t]_{u_t}, \ \  && k_t := -[M_t^{-1}n_t]_{u_t}, \\
K_{\lambda_t} &:= -[M_t^{-1}N_t]_{\lambda_t}, \ \ && k_{\lambda_t} := -[M_t^{-1}n_t]_{\lambda_t}, \\
K_{\psi_{t+1}} &:= -[M_t^{-1}N_t]_{\psi_{t+1}}, \ \ && k_{\psi_{t+1}} := -[M_t^{-1}n_t]_{\psi_{t+1}}, \\
K_{\mu_t} &:= -[M_t^{-1}N_t]_{\mu_t}, \ \ && k_{\mu_t} := -[M_t^{-1}n_t]_{\mu_t},
\end{alignedat}
\end{equation}
The advantage of expressing our system in the form \cref{eq:matrix_sys_t} is that the computation performed to solve $K_{t+1}$ and $k_{t+1}$ can be reused to solve $K_t$ and $k_t$. Using the block form of $M_t$ defined in \cref{eq:matrix_sys_t}, and pre suming $M_{t+1}$ is non-singular, we have from \cite{lu2002inverses}:

\begin{equation}
\begin{aligned}
    [M_t^{-1}[N_t \ n_t]]_{u_t,\mu_t,\lambda_t,\psi_t,x_{t+1}} &=
    \begin{bmatrix} P_t^1 & P_t^2 \end{bmatrix} \begin{bmatrix} N_t & n_t \end{bmatrix}, \\
    &= \begin{bmatrix} P_t^1 N_t, & P_t^1 \begin{bmatrix} r_t \\ h_t \\ -c_t \\  \\ \ \end{bmatrix} + P_t^2 n_{t+1} \end{bmatrix}
\end{aligned}
\end{equation}
where the matrices $P_t^1$ and $P_t^2$ are defined as 
\begin{equation}
    \begin{aligned}
    P_t^1 &:= \big(D_t^1 - D_t^2M_{t+1}^{-1}[0 \ N_{t+1}]\big)^{-1} \\
    P_t^2 &:= -P_t^1 D_t^2 M_{t+1}^{-1}
    \end{aligned}
\end{equation}
Substituting in the form of the matrices in \cref{eq:matrix_sys_t}, we have that 
\begin{equation}
    \begin{aligned}
    P_t^1 &:= \begin{bmatrix}
    R_t & -\hat{H}_{u_t}^\intercal & \hat{B}_t^\intercal & \\
    H_{u_t} \\
    -B_{t} & & & & I_n \\
    & & -I_{N*n} & \Pi_{t+1}^\intercal & \hat{P}_t^{1,a} \\ 
    & & & -I_{(N-1)*m_{t+1}} & \hat{P}_t^{1,b}  
    \end{bmatrix}^{-1}, \\
    \hat{P}_t^{1,a} &:= Q_{t+1}-S_{u_{t+1}}K_{t+1}+\hat{H}_{x_{t+1}}^\intercal K_{\mu_{t+1}} - \hat{A}_{t+1}^\intercal K_{\lambda_{t+1}}, \\
    \hat{P}_t^{1,b} &:= \tilde{S}_{x_{t+1}} - \tilde{R}_{t+1}K_{t+1} + \tilde{H}_{u_{t+1}}^\intercal K_{\mu_{t+1}} - \tilde{B}_{t+1}^\intercal K_{\lambda_{t+1}}, \\
    P_t^2 n_{t+1} &:= -P_t^1 \begin{bmatrix}  \\   S_{u_{t+1}}k_{t+1} - \hat{H}_{x_{t+1}}^\intercal k_{\mu_{t+1}} + \hat{A}_{t+1}^\intercal k_{\mu_{t+1}} \\ 
    \tilde{R}_{t+1} k_{t+1} - \tilde{H}_{u_{t+1}}^\intercal k_{\mu_{t+1}} + \tilde{B}_{t+1}^\intercal k_{\lambda_{t+1}}
    \end{bmatrix}.
    \end{aligned}
\end{equation}
From the above, it can be seen that the entire matrix inverse $M_t^{-1}$ does not need to be computed for any stage $t$ (other than the terminal stage $T$), and the factorization presented here allows computation of the entire GFNE trajectory and associated multipliers very efficiently. In particular, the overall computational complexity of solving this system is $O(T\cdot((N+1)\cdot(n+m))^2\cdot(n+1))$ time due to the dominating cost of at each stage solving the system of equations of the form $P_t^1 W_t$, where $(P_t^1)^{-1}$ is a square matrix of width no greater than $(N+1)\cdot(n+m)$, and $W_t$ is some matrix with $n+1$ columns. 

After computing the terms \cref{eq:policies_lq_t} for all stages $t$ using the procedure above, the resultant GFNE trajectory and associated multipliers can be extracted:

\begin{equation}
\label{eq:extract_solution_lq}
\begin{alignedat}{3}
    x_1^* &:= \hat{x}_1, \\
    u_s^* &:= K_s x_s^* + k_s, &&s\in\mathbf{T}, \\
    \mu_s &:= K_{\mu_s}x_s^* + k_{\mu_s}, &&s\in\mathbf{T}, \\
    \lambda_s &:= K_{\lambda_s} x_s^* + k_{\lambda_s}, &&s\in\mathbf{T}, \\
    \psi_{s} &:= K_{\psi_{s}} x_{s-1}^* + k_{\psi_{s}}, &&s\in\mathbf{T_2}, \\
    x_{s+1}^* &:= A_s x_s^* + B_s u_s^* + c_s, \ \  &&s\in\mathbf{T}, \\
    \mu_{T+1} &:= K_{\mu_{T+1}}x_T + k_{\mu_{T+1}}.
\end{alignedat}
\end{equation}

\section{Inequality-Constrained LQ Games} 
\label{sec:inequality_lq}
We now extend the basic results presented in \cref{sec:equality_lq} on the computation of GFNE for equality-constrained LQ games, to the computation of GFNE for inequality-constrained LQ games. The approach we take here is that of an active-set method, analogous to active-set methods for quadratic programming (see, e.g. \cite{nocedal2006numerical}).

Consider a dynamic game among $N$ players over $T$ stages, with linear dynamics described by \cref{eq:linear_dynamics}, and quadratic cost functionals \cref{eq:quadratic_cost_functional}. Assume that each player is also subject to linear equality constraints of the form \cref{eq:linear_eq_constraints}, along with linear inequality constraints of the form 
\begin{equation}
    \label{eq:linear_ineq_constraints}
    \begin{aligned}
    0 &\leq G_{x_t}^i x_t + G_{u_t^1}^i u_t^1 + ... + G_{u_t^N}^i u_t^N + g_t^i, \ \ t\in\mathbf{T} \\
    0 &\leq G_{x_{T+1}}^i x_{T+1} + g_{T+1}^i,
    \end{aligned}
\end{equation}
for matrices $G_{x_t}^i \in \mathbb{R}^{b_t^i\times n}$, $G_{u_t^j}^i \in \mathbb{R}^{b_t^i\times m_t^j}$, and vectors $g_t^i\in\mathbb{R}^{b_t^i}$, where $b_t^i$ is the dimension of the inequality constraint imposed on player $i$ at stage $t\in\mathbf{T^+}$. As in \cref{sec:formulation}, we associate multipliers $\gamma_{t}^i \in\mathbb{R}^{b_t^i}$, $t\in\mathbf{T^+}$ with these constraints for each player $i\in\mathbf{N}$. Assume that a solution to the system \cref{eq:foncs} exists for this game at stage $t=1$, and that strict complementarity holds for the conditions \cref{eq:foncs} for the subgame starting at every $t\in\mathbf{T}$ along any solution. 

The method we present for computing a GFNE of this game is an adaptation of Algorithm 16.3 in \cite{nocedal2006numerical} to the current setting. Under the strict complementarity assumption (which ensures differentiability of the policies $\pi_t$ along the solution), we have that at any GFNE solution, some subset of the constraints \cref{eq:linear_ineq_constraints} associated with strictly positive multipliers hold with equality at the solution. If the set of active constraints along some solution were known in advance, we could consider all active constraints as equality constraints, ignore all inactive constraints, and solve for the resultant equality-constrained game using the method presented in \cref{sec:equality_lq}. In general the set of active constraints along a solution is obviously unknown in advance. The active-set method we propose accounts for this by iteratively solving for the unique GFNE solution for different guesses of the active constraint set, and uses dual variable information to update the guess of the active set. In the remainder of this section we describe the proposed method. The presentation of this section is based off of section 16.5 in \cite{nocedal2006numerical}, with necessary modifications made to account for the multiplayer feedback setting considered here.  

The method begins with a feasible initialization for the game (defined by the linear dynamics \cref{eq:linear_dynamics}, equality constraints \cref{eq:linear_eq_constraints}, and inequality constraints \cref{eq:linear_ineq_constraints}, and the quadratic cost functionals \cref{eq:quadratic_cost_functional}). We denote the set of all primal variables associated with the game at the $k$th iteration of the method by $\mathbf{X}_k := [x_{(1:T+1),k}, u_{(1:T),k}]$. Also associated with the $k$th iteration of the algorithm is the working set $\mathcal{W}_k$ which denotes the set of constraints which are treated with equality at the $k$th iteration. The working set $\mathcal{W}_k$ always contains all of the equality constraints \cref{eq:linear_eq_constraints}. The working set $\mathcal{W}_1$ is taken to be a subset of the constraints active along the initialization $\mathbf{X}_1$.

Given an iterate $\mathbf{X}_k$ and working set $\mathcal{W}_k$, we find a step $\mathbf{P}_k := [p_{x_{1:T+1}}, p_{u_{1:T}}]$ is to move $\mathbf{X}_k$ to the GFNE associated with the working set of equality constraints in $\mathcal{W}_k$. Specifically, the problem to be solved at each iteration is the GFNE problem for the equality-constrained LQ game defined by each player's stage-wise cost functionals 
\begin{equation}
    \label{eq:quadratic_cost_functional_active}
    \begin{alignedat}{3}
    l^i_{t,k}(p_{x_t},p_{u_t}) &:= \frac{1}{2}\bigg( \begin{bmatrix} p_{x_t} \\p_{u_t} \end{bmatrix}^\intercal \begin{bmatrix} Q_t^i & S_t^{i\intercal} \\ S_t^i & R_t^i\end{bmatrix} \begin{bmatrix} p_{x_t} \\p_{u_t}\end{bmatrix} + 2\begin{bmatrix} p_{x_t}\\p_{u_t}\end{bmatrix}^\intercal \begin{bmatrix} q_{t,k}^i \\ r_{t,k}^i \end{bmatrix} \bigg), \ \ &&t\in\mathbf{T}  \\
        q_{t,k}^i &:= Q_t^i x_{t,k} + S_t^{i\intercal}u_{t,k} + q_t^i, \ &&t\in\mathbf{T}, \\
    r_{t,k}^i &:= S_t^i x_{t,k} + R_t^i u_{t,k} + r_t^i, && t\in\mathbf{T}, \\
    l^i_{T+1,k}(p_{x_{T+1}}) &:=\frac{1}{2} \bigg(p_{x_{T+1}}^\intercal Q_{T+1}^i p_{x_{T+1}} + 2p_{x_{T+1}}^\intercal q_{T+1,k}^i\bigg), \\
    q_{T+1,k}^i &:= Q_{T+1}^i x_{T+1,k} + q_{T+1}^i,
    \end{alignedat}
\end{equation}
the dynamics
\begin{equation}
    \label{eq:linear_dynamics_active}
    p_{x_{t+1}} = A_t p_{x_t} + B_t^1 p_{u_t^1} + ... + B_t^N p_{u_t^N} , \ \ t\in\mathbf{T},
\end{equation}
and the linear equality constraints 
\begin{equation}
    \label{eq:linear_eq_constraints_active}
    \begin{aligned}
    0 &= H_{(x_t,k)}^i p_{x_t} + H_{(u_t^1,k)}^i p_{u_t^1} + ... + H_{(u_t^N,k)}^i p_{u_t^N}, \ \ t\in\mathbf{T} \\
    0 &= H_{(x_{T+1},k)}^i p_{x_{T+1}}.
    \end{aligned}
\end{equation}

Above, the matrices $H^i_{(x_t,k)}$ and $H^i_{(u_t^j,k)}$ are defined to be the set of active equality constraint coefficients corresponding to $\mathcal{W}_k$:

\begin{equation}
    \label{eq:active_eqs}
    H^i_{(x_t,k)} := \mleft[ \begin{array}{c} H^i_{x_t} \\ \hline  \vdots \\ \{ G_{x_t}^{i,j} \}_{(t,i,j)\in\mathcal{W}_k \cap \mathcal{I}}  \\ \vdots\end{array} \mright], H_{(u_t,k)}^i := \mleft[ \begin{array}{c} H^i_{u_t} \\ \hline  \vdots \\ \{ G_{u_t}^{i,j} \}_{(t,i,j)\in\mathcal{W}_k \cap \mathcal{I}}  \\ \vdots\end{array}\mright].
\end{equation}
Here, $\mathcal{W}_k\cap \mathcal{I}$ is the index set of all active inequality constraints, and $G_{x_t}^{i,j}$ is the $j$th row of the matrix $G_{x_t}^i$.

Associated with the constraints \cref{eq:linear_eq_constraints_active} are multipliers $\mu_{(t,k)}^i$, defined as 
\begin{equation}
    \mu_{(t,k)}^i := \mleft[ \begin{array}{c}
    \mu_t^i \\ \hline \vdots \\ \{\gamma_t^{i,j}\}_{(t,i,j)\in\mathcal{W}_k\cap\mathcal{I}} \\ \vdots
    \end{array} \mright]
\end{equation}
where $\gamma_t^{i,j}$ is the $j$th element of $\gamma_t^i$. 

After solving for $\mathbf{P}_k$, the GFNE of the resultant equality-constrained LQ game, $\mathbf{X}_k + \mathbf{P}_k$ is the GFNE for the equality-constrained LQ game defined by \cref{eq:linear_dynamics}, \cref{eq:quadratic_cost_functional}, \cref{eq:linear_eq_constraints}, and the active constraints \cref{eq:linear_ineq_constraints} in $\mathcal{W}_k\cap\mathcal{I}$. However, it may be that $\mathbf{X}_k + \mathbf{P}_k$ is infeasible with respect to the entire set of inequality constraints \cref{eq:linear_ineq_constraints}. Therefore we instead find the point $\mathbf{X}_k + \beta_k \mathbf{P}_k$, where 
\begin{equation}
    \label{eq:find_alpha}
    \begin{aligned}
    \beta_k := &\max_{\beta\in[0,1]} \beta \\
    \text{s.t.   } \mathbf{X}_k + &\beta \mathbf{P}_k \text{ feasible w.r.t. \cref{eq:linear_ineq_constraints}}.
    \end{aligned}
\end{equation}
 The optimization in \cref{eq:find_alpha} is a linear program, and $\beta_k$ can be computed exactly and efficiently. When $\beta_k < 1$, it implies that there is an inequality constraint not considered in the working set which must be accounted for. When this is the case, the iterate $\mathcal{X}_{k+1}$ is updated to the point $\mathbf{X}_k + \beta_k \mathbf{P}_k$, and the working set is updated to include the blocking constraint. 
If instead, $\beta_k = 1$, then the point $\mathbf{X}_k + \mathbf{P}_k$ both is a GFNE solution for the working set and is feasible with respect to all equality and inequality constraints. All that is left to check is whether the constraints $\mu_{(t,k)}^{i,j} > 0$ for all $j > a_t^i$, meaning the complementarity conditions in \cref{eq:foncs} are satisfied, and therefore a solution satisfying the entire set of conditions \cref{eq:foncs} for the inequality-constrained problem has been found. 
If some multiplier associated with the inequality constraints in the working set $\mathcal{W}_k$ is negative\footnote{The multipliers cannot be zero by the strict complementarity assumption made.}, then the corresponding constraint is to be dropped from the working set at the next iteration. Unlike the the convex quadratic programming setting, where forming $\mathcal{W}_{k+1}$ by dropping a constraint associated with a negative multiplier in the set $\mathcal{W}_k\cap\mathcal{I}$ and setting $\mathbf{X}_{k+1}=\mathbf{X}_k +\mathbf{P}_k$, the update $\mathbf{P}_{k+1}$ does not necessarily move away from the dropped constraint boundary. In such situations, the procedure fails to make progress, since the value of $\beta_{k+1}$ in \cref{eq:find_alpha} will be $0$. In practice these conditions can be treated by dropping a different constraint (also associated with a negative multiplier) from the working set. If no other constraints are associated with negative multipliers, a GFNE does not exist in the vicinity of the iterate, and failure is declared.  The full procedure is stated in \cref{alg:lq_ineq}.

\begin{algorithm}
\caption{Active Set Inequality Constrained LQ Game GFNE Solver}
\label{alg:lq_ineq}
\begin{algorithmic}[1]
\STATE{Start with $\mathbf{X}_1$ feasible w.r.t. \cref{eq:linear_dynamics}, \cref{eq:linear_eq_constraints}, \cref{eq:linear_ineq_constraints}}
\STATE{Initialize $\mathcal{W}_1$ to be subset of active inequality constraints along $\mathbf{X}_1$}
\FOR{k=1,2,3,...}
\STATE{Solve equality-constrained LQ GFNE defined by \cref{eq:quadratic_cost_functional_active}, \cref{eq:linear_dynamics_active}, \cref{eq:linear_eq_constraints_active}, and denote the solution as $\mathbf{P}_k$}
\IF{$\mathbf{P}_k == \mathbf{0}$}
    \STATE{Extract multipliers $\mu_{(t,k)}^i$, $\psi_t^i$, $\lambda_t^i$, using \cref{eq:extract_solution_lq}}
    \IF{$\mu_{(t,k)}^{i,j} > 0, \ \forall j > a_t^i$ \COMMENT{Inequality constraint multipliers}}
        \RETURN $\mathbf{X}_k$, 
    \ELSE
    \STATE\label{line10}{$(t_m,i_m,j_m)\gets \underset{(t,i,j)\in\mathcal{W}_k\cap\mathcal{I}}{\text{argmin}} \mu_{(t,k)}^{i,j}$}
    \STATE{$\mathbf{X}_{k+1}\gets\mathbf{X}_k$}
    \STATE\label{line12}{$\mathcal{W}_{k+1}\gets\mathcal{W}_k\setminus\{(t_m,i_m,j_m)\}$}
    \ENDIF
\ELSE
    \STATE{Find largest $\beta_k\in[0,1]$ such that $\mathbf{X}_k + \beta_k \mathbf{P}_k$ is feasible w.r.t. \cref{eq:linear_ineq_constraints}}
    \STATE{$\mathbf{X}_{k+1} \gets \mathbf{X}_k + \beta_k\mathbf{P}_k$}
    \IF{$\beta_k < 1$}
    \STATE{$(t_b,i_b,j_b) \gets $ index of blocking inequality constraint not already in $\mathcal{W}_k\cap\mathcal{I}$}
    \IF{$(t_b,i_b,j_b) == (t_m,i_m,j_m)$ \COMMENT{Blocking index is previously dropped index}}
    \STATE\label{cycle}{Choose other blocking constraint or declare failure}
    \ELSE
    \STATE{$\mathcal{W}_{k+1}\gets\mathcal{W}_k\cup (t_b,i_b,j_b)$}
    \ENDIF
    \ELSE
    \STATE{$\mathcal{W}_{k+1}\gets\mathcal{W}_k$}
    \ENDIF
\ENDIF
\ENDFOR
\end{algorithmic}
\end{algorithm}

\section{Nonlinear Games}
\label{sec:nonlinear_games}

We now present a method for computing GFQNE solutions to general dynamic games. In particular, we outline a procedure for finding a solution to the conditions \cref{eq:qfoncs} for games defined by the dynamics \cref{eq:dynamics}, cost functionals \cref{eq:cost_functional}, and constraints \cref{eq:equality_constraints,eq:inequality_constraints}. We assume that all functions appearing in the conditions \cref{eq:qfoncs} are continuously twice differentiable, with the exception of the implicitly defined policies. The procedure leverages the method for computing solutions to inequality-constrained LQ games presented in \cref{sec:inequality_lq} and \cref{alg:lq_ineq}, and generally is inspired by Sequential Quadratic Programming methods for non-convex numerical optimization (see e.g. \cite{nocedal2006numerical}, Chapter 18). 

The foundation of this approach is in the observation that a Newton-style method can be used to finding a solutions to the conditions \cref{eq:qfoncs}, where each iteration involves solving for a GFNE for the locally approximate LQ game formed around the current iterate. Considering first the case in which the game does not include any inequality constraints, computing a search direction using Newton's method on the conditions \cref{eq:qfoncs} at $t=1$ can be seen to be equivalent to computing a search direction by solving an LQ approximation of the game. In the inequality-constrained case, search directions can be found by solving an inequality-constrained LQ approximation, analogous to the method in \cite{nocedal2006numerical}. 

In particular, we propose an iterative method for finding solutions to the conditions \cref{eq:qfoncs}. Throughout this section, iterations are again indexed by $k=1,2,3,...$, as they were in the method for computing inequality-constrained LQ games in \cref{sec:inequality_lq}. Here, let the current iterate of the primal and dual game variables at iteration $k$ be denoted as
\begin{equation} 
\begin{aligned}
\mathbf{X}_k &:= [x_{(1:T+1),k},u_{(1:T),k}], \\
\mathbf{\Lambda}_k &:= [\lambda_{(1:T),k}, \mu_{(1:T+1),k}, \gamma_{(1:T+1),k}, \psi_{(2:T),k}] 
\end{aligned}
\end{equation}
 Assume some initialization of all variables $\mathbf{X}_1$ and $\mathbf{\Lambda}_1$. Then for each iteration $k$, a search direction $\mathbf{P}_k$ is found by solving the inequality-constrained LQ game formed as follows. Let the dynamics \cref{eq:linear_dynamics} for the approximate game at the $k$th iteration be defined by the terms 
\begin{equation}
\label{eq:linear_dyn_iter_k}
\begin{aligned}
    A_{t,k} &:= \nabla_{x} f_t(x_{t,k},u_{t,k}), \\
    B^i_{t,k} &:= \nabla_{u^i} f_t(x_{t,k},u_{t,k}), \\
    c_{t,k} &:= f_t(x_{t,k},u_{t,k}) - x_{t+1,k}.
\end{aligned}
\end{equation}
Similarly, let the equality and inequality constraint terms in  \cref{eq:linear_eq_constraints,eq:linear_ineq_constraints} be defined as 
\begin{equation}
\label{eq:linear_constraint_iter_k}
\begin{alignedat}{3}
H_{x_t,k}^i &:= \nabla_x   h_t^i(x_{t,k},u_{t,k}), \ \  &&G_{x_t,k}^i := \nabla_x   g_t^i(x_{t,k},u_{t,k}),\\
H_{u_t^i,k}^i &:= \nabla_{u^i} h_t^i(x_{t,k},u_{t,k}), \ \  &&G_{u_t^i,k}^i := \nabla_{u^i}   g_t^i(x_{t,k},u_{t,k}), \\
h_{t,k}^i &:= h_t^i(x_{t,k},u_{t,k}), \ \  && \ \ g_{t,k}^i :=    g_t^i(x_{t,k},u_{t,k}).
\end{alignedat}
\end{equation}

Finally, let the cost functional coefficients in \cref{eq:quadratic_cost_functional} for stages $t\in\mathbf{T}$ be defined as 
\begin{equation}
    \label{eq:quadratic_cost_iter_k}
    \begin{alignedat}{3}
    Q_{t,k}^i &:=  \nabla^2_{x,x}l_t^i(x_{t,k},u_{t,k}) + (\nabla^2_{x,x} f_t)^\intercal \lambda_{t,k}^i - (\nabla^2_{x,x} h_t^i)^\intercal \mu_{t,k}^i - (\nabla^2_{x,x} g_t^i)^\intercal \gamma_{t,k}^i, \\  
    S_{t,k}^i &:= \nabla^2_{u,x}l_t^i(x_{t,k},u_{t,k})+ (\nabla^2_{u,x} f_t)^\intercal \lambda_{t,k}^i - (\nabla^2_{u,x} h_t^i)^\intercal \mu_{t,k}^i - (\nabla^2_{u,x} g_t^i)^\intercal \gamma_{t,k}^i, \\ 
    R_{t,k}^i &:= \nabla^2_{u,u}l_t^i(x_{t,k},u_{t,k})+ (\nabla^2_{u,u} f_t)^\intercal \lambda_{t,k}^i - (\nabla^2_{u,u} h_t^i)^\intercal \mu_{t,k}^i - (\nabla^2_{u,u} g_t^i)^\intercal \gamma_{t,k}^i, \\
    q_{t,k}^i &:= \nabla_x l_t^i(x_{t,k},u_{t,k}), \\
    r_{t,k}^i &:= \nabla_u l_t^i(x_{t,k},u_{t,k}).
    \end{alignedat}
\end{equation}

The solution to this inequality-constrained LQ game at each iteration $k$ yields the search direction $\mathbf{P}_k$ and multipliers $\bar{\mathbf{\Lambda}}_{k+1}$. To ensure progress towards a solution of the conditions \cref{eq:qfoncs}, a line-search procedure is invoked. We seek a parameter $\alpha_k \in [0,1]$ such that the iterates \begin{equation}
\begin{aligned}
\mathbf{X}_{k+1} &:= \mathbf{X}_k + \alpha_k \mathbf{P}_k, \\ 
\mathbf{\Lambda}_{k+1} &:= \mathbf{\Lambda}_k + \alpha_k (\bar{\mathbf{\Lambda}}_{k+1} - \mathbf{\Lambda}_k) 
\end{aligned}
\end{equation} make maximal progress towards satisfying the conditions \cref{eq:qfoncs}, with respect to an appropriate merit function. Because the game setting involves multiple players, a decrease in the objective of all players' objectives cannot be guaranteed. Therefore the merit function we consider is simply the residual squared norm of the conditions \cref{eq:qfoncs}. In particular, define the merit function to search over as the following:
\begin{equation}
    \label{eq:merit_func}
    \begin{alignedat}{3}
    \mathbf{M}(\mathbf{X}, \mathbf{\Lambda}) := & \sum_{i\in\mathbf{N}} \sum_{s\in\mathbf{T}} \ 
    && \left\| \nabla_{u_s^i} \bigg[ l_s^{i} + f_s^\intercal \lambda_{s}^i - h_s^{i\intercal} \mu_{s}^i  -  g_s^{i^\intercal} \gamma_{s}^i    \bigg] \right\|_2^2 + \\
    &\sum_{i\in\mathbf{N}} \sum_{s\in\mathbf{T_2}} \ && \left\| \nabla_{x_s} \bigg[ l_s^i - \lambda_{s-1}^i + f_s^\intercal \lambda_{s}^i - h_s^{i^\intercal} \mu_s^i  -  g_s^{i\intercal} \gamma_{s}^i \bigg]  + \tilde{\nabla}_{x_s} \hat{\pi}_s^{-i\intercal} \psi_{s}^i \right\|_2^2 + \\
     &\sum_{i\in\mathbf{N}} \sum_{s\in\mathbf{T_2}} \ && \left\| \nabla_{u^{-i}_s} \bigg[ l_s^i + f_s^\intercal \lambda_{s}^i - h_s^{i^\intercal} \mu_s^i  -  g_s^{i\intercal} \gamma_s^i  - \psi_s^i \bigg] \right\|_2^2 + \\
     &\sum_{i\in\mathbf{N}} \ && \left\| \nabla_{x_{T+1}} \bigg[ l_{T+1}^i - \lambda_{T}^i - h_{T+1}^{i^\intercal} \mu_{T+1}^i  -  g_{T+1}^{i\intercal} \gamma_{T+1}^i \bigg] \right\|_2^2 + \\
     & \sum_{s\in\mathbf{T}} \ && \left\| x_{s+1} - f_s(x_{s},u_{s}) \right\|_2^2 + \\
      & \sum_{i\in\mathbf{N}} \Big( \sum_{s\in\mathbf{T}} \ && \left\| h_s^i(x_{s},u_{s}) \right\|_2^2 + \left\| h_{T+1}^i(x_{T+1}) \right\|_2^2 \Big) + \\
      & \sum_{i\in\mathbf{N}} \Big( \sum_{s\in\mathbf{T}} \ && \left\| \min(g_s^i(x_{s},u_{s}),0) \right\|_2^2 + \left\| \min(g_{T+1}^i(x_{T+1}),0) \right\|_2^2 \Big) + \\
      & \sum_{i\in\mathbf{N}} \sum_{s\in\mathbf{T^+}} \ && \left\| \min(\mu_{s}^i,0) \right\|_2^2 + |(g_s^i)^\intercal \mu_{s}^i|.
    \end{alignedat}
\end{equation}

 Recall that $\mathbf{T}_2 := \{2,...,T\}$. Then $\alpha_k$ is defined as:
\begin{equation}
    \label{eq:line_search}
    \begin{aligned}
    \alpha_k := \min_{\alpha\in[0,1]} \mathbf{M}(\mathbf{X}_k + \alpha \mathbf{P}_k, \mathbf{\Lambda}_k + \alpha (\bar{\mathbf{\Lambda}}_{k+1} - \mathbf{\Lambda}_k)).
    \end{aligned}
\end{equation}

\begin{algorithm}
\caption{GFQNE Solver for Nonlinear Games}
\label{alg:nlgames}
\begin{algorithmic}[1]
\STATE{Set convergence tolerance $\epsilon > 0$}
\STATE{Start with initial $\mathbf{X}_1$, $\mathbf{\Lambda}_1$}
\FOR{$k$=1,2,3,...,$k_{max}$}
\STATE{Solve inequality-constrained LQ GFNE defined by \cref{eq:linear_dyn_iter_k}, \cref{eq:linear_constraint_iter_k}, \cref{eq:quadratic_cost_iter_k}, and denote the solution and corresponding multipliers as $\mathbf{P}_k$, $\bar{\mathbf{\Lambda}}_{k+1}$}
\STATE{Find $\alpha_k$ according to \cref{eq:line_search} and \cref{eq:merit_func} via backtracking line-search}
\IF{$\alpha_k == 0$}
\STATE{Return failure}
\ENDIF
\STATE{$\mathbf{X}_{k+1} \gets \mathbf{X}_k + \alpha_k \mathbf{P}_k$}
\STATE{$\mathbf{\Lambda}_{k+1} \gets \mathbf{\Lambda}_{k} + \alpha_k (\bar{\mathbf{\Lambda}}_{k+1} - \mathbf{\Lambda}_{k})$ }
\IF{$\mathbf{M}(\mathbf{X}_{k+1}, \mathbf{\Lambda}_{k+1}) < \epsilon$}
\STATE{Return $\mathbf{X}_{k+1}, \mathbf{\Lambda}_{k+1}$ and break}
\ENDIF
\ENDFOR
\end{algorithmic}
\end{algorithm}

The choice of merit function need not be as defined in \cref{eq:merit_func}. Any positive-valued function which evaluates to 0 if and only if the arguments constitute a solution to \cref{eq:qfoncs} is acceptable. In the line-search procedure \cref{eq:line_search} using any such merit function, it is necessary in general to evaluate the policy quasi-gradients at the candidate point $\mathbf{X}_k + \alpha \mathbf{P}_k$, $\mathbf{\Lambda}_k + \alpha (\bar{\mathbf{\Lambda}}_{k+1} - \mathbf{\Lambda}_k))$ for general $\alpha$. However, recall that these terms are implicitly defined, and in general require solving the approximate LQ game defined at the candidate point to evaluate them. This makes the evaluation of \cref{eq:line_search} very expensive. Therefore, in practice, we find it acceptable in most cases to replace the policy quasi-gradients appearing in \cref{eq:merit_func} corresponding to the candidate point, with the quasi-gradients corresponding to the point $\mathbf{X}_k, \mathbf{\Lambda}_k$, which are evaluated in the computation of the search direction $\mathbf{P}_k$. Also, even with the reuse of the policy quasi-gradients, the minimization in \cref{eq:line_search} cannot be carried out exactly. In practice, a backtracking line-search satisfying a sufficient decrease condition is used instead. 

% The complete algorithm for computing solutions to \cref{eq:qfoncs} for nonlinear games is given in \cref{alg:nlgames}. 

\cref{alg:nlgames} gives the full algorithm for computing solutions for nonlinear games. \revise{It is important to note that the proposed algorithm is not guaranteed to converge to a GFQNE solution. Due to the omission of second-order information of the policies $\hat{\pi}_t$ in the definition of the matrices $Q_{t,k}^i$, it is not possible to guarantee that the resultant solution to the LQ game is a descent direction for the proposed merit function. In such a case, failure to converge is reported, as stated in \cref{alg:nlgames}. Despite the possibility of failure, we find in practice that the proposed method is effective at generating search directions which enable progress with respect to \cref{eq:merit_func}.}

\section{Example}
\label{sec:example}

\begin{figure}
\centering
\begin{subfigure}{.5\textwidth}
  \centering
  \includegraphics[width=.9\linewidth]{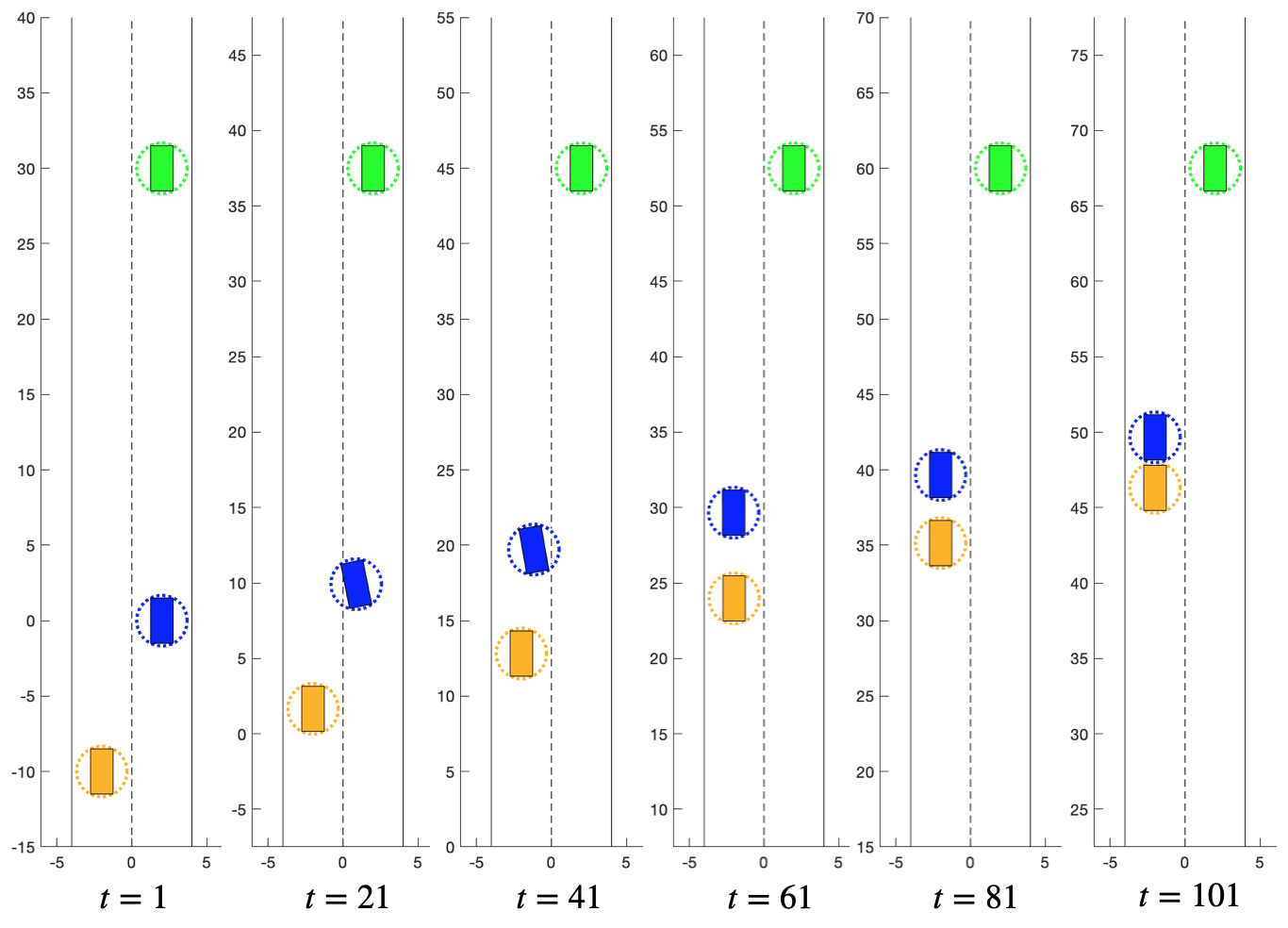}
  \caption{$\sigma_{polite} = 0$}
  \label{fig:sub1}
\end{subfigure}%
\begin{subfigure}{.5\textwidth}
  \centering
  \includegraphics[width=.9\linewidth]{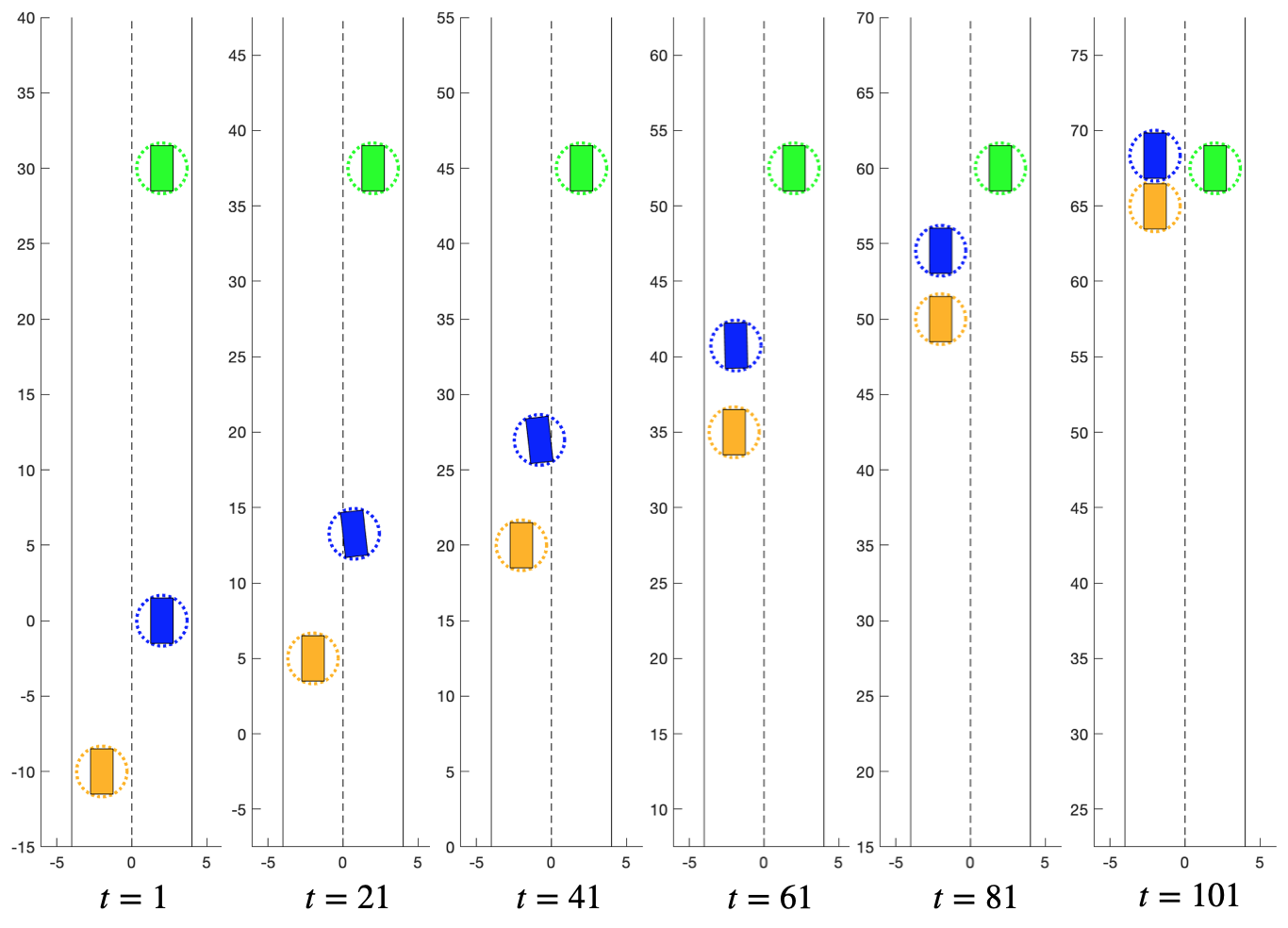}
  \caption{$\sigma_{polite} = 5$}
  \label{fig:sub2}
\end{subfigure}
\caption{Snapshots of the resultant GFQNE solutions found to the example in \cref{sec:example}, when excluding (a) and including (b) the politeness term in player $1$'s objective.} 
\label{fig:driving}
\end{figure}

\begin{table}
    \centering
    \begin{tabular}{cccccc} \toprule
    \makecell{Major\\  Iteration} & \makecell{Minor \\ Iteration }& \makecell{Working Set \\ Indices} & Comment & $\alpha_k$ & $\mathbf{M}(\mathbf{X}_{k+1},\mathbf{\Lambda}_{k+1})$ \\ \midrule
    1  & F1 & \{\} &    \\
       & F2  & \{(101,2)\} &  \\
       & F3  & \{(101,1),(101,2)\} &    \\
       & 1 & \{(101,1),(101,2)\} &   \\
       & 2 & \{(101,2)\} &   \\
       & 3 & \{(101,1),(101,2)\} &  Cycle  & 1 & 96.25 \\ \midrule
    2  & F1 & \{(101,2)\} &     \\
       & F2 & \{(101,1),(101,2)\} &    \\
       & 1 & \{(101,1),(101,2)\} & \\
       & 2  & \{(101,2)\} &    \\
       & 3  &  \{(101,1),(101,2)\} &  Cycle & 1  & 55.65 \\ \midrule
    3  & F1  & \{(101,2)\} &   \\
       & 1   & \{(101,2)\} & Solution  & 1 & 0.221 \\ \midrule
    4  & F1  & \{(101,2)\} &   \\
       & 1   & \{(101,2)\} & Solution  & 1 & 2.3e-5 \\ \bottomrule
    \makecell{Total \\ Time} & \makecell{Total\\ LQ Solves} & Solve Time & \makecell{Function \\ Eval Time}  \\ \toprule 
    5.14 & 17 & 0.97 & 4.17  \\ \midrule
\end{tabular}
    \caption{Algorithm iterate information when using \cref{alg:nlgames} (Major Iterations) and \cref{alg:lq_ineq} (Minor Iterations) to compute a GFQNE to the example in \cref{sec:example}, when $\sigma_{polite} = 5$ (\cref{fig:sub2}).  Here $\mathbf{M}(\mathbf{X}_{k+1},\mathbf{\Lambda}_{k+1})$ is the merit function value after performing a line search in the direction of $\mathbf{P}_k$ in \cref{alg:nlgames}. In each major iteration of this solve, $\alpha_k = 1$, i.e., no backtracking was necessary during line search. 
    % in the line search procedure.
    Here, minor iterations labeled ``F$\Box$'' indicate equality-constrained LQ solves used to search for a feasible initial solution to the inequality-constrained LQ game associated with the major iteration. Detections of cycles indicate that the removal of a constraint associated with a negative multiplier did not move the iterate associated with \cref{alg:lq_ineq} away from the dropped constraint boundary (\cref{cycle}). In these cases, the iterate is accepted and the algorithm continues with the next major iteration. }
    \label{tab:driving}
\end{table}

We now demonstrate the methodologies so far presented on a practical example.  
Consider a game describing a driving scenario involving an autonomous vehicle and two other vehicles on a freeway. Here $N=3$, and $T=100$ denotes the number of discrete time-points in the trajectory game. Let the game dynamics be defined as the concatenation of the independent dynamics of each vehicle in the game. We assume that each vehicle follows simple unicycle dynamics; specifically:
\begin{equation}
    x_{t+1}^i = \begin{bmatrix*}[l] x_{t+1}^{i,1} \\
    x_{t+1}^{i,2} \\
    x_{t+1}^{i,3}  \\
    x_{t+1}^{i,4} 
    \end{bmatrix*} = f_t^i(x_t^i, u_t^i) = \begin{bmatrix*}[l] x_t^{i,1} + \Delta \cdot x_t^{i,3} \cos(x_t^{i,4}) \\
    x_t^{i,2} + \Delta \cdot x_t^{i,3} \sin(x_t^{i,4}) \\
    x_t^{i,3} + \Delta \cdot u_t^{i,1} \\
    x_t^{i,4} + \Delta \cdot u_t^{i,2}
    \end{bmatrix*}, \hspace{5mm} i \in \mathbf{N}.
\end{equation}
Here $\Delta$ represents some small sampling time. 
% The interpretation of the vehicles states and controls are the following: 
The vehicle state dimensions denote, in order, the longitudinal (along-lane) position, lateral (across-lane) position, speed in the direction of the vehicle heading, and the vehicle heading in radians. The control dimensions denote, in order, the vehicle acceleration and the angular velocity.
% The first dimension is the longitudinal (along-lane) position, the second is the lateral (across-lane) position. The third dimension is the speed in the direction of the vehicle heading, and the fourth dimension is the vehicle heading in radians. The first control dimension is the vehicle acceleration, and the second dimension is the angular velocity. 
The dynamics for the entire game state are then given as:
\begin{equation}
    x_{t+1} := \begin{bmatrix} f_t^1(x_t^1,u_t^1) \\  f_t^2(x_t^2,u_t^2) \\  f_t^3(x_t^3,u_t^3) \end{bmatrix} = f_t(x_t,u_t), \ t\in\mathbf{T}.
\end{equation}

Players $2$ and $3$ both aim to minimize acceleration, angular velocity, and deviation from desired speed, while staying in-lane. Player $3$ also desires to avoid collisions with player $1$. 
Player $1$ aims to complete a lateral lane change while minimizing its own acceleration, angular velocity, and deviation from its desired speed, while avoiding collision with player $3$. 
Player 1 also attempts to minimize the objective of player $2$ in addition to its own objective. \cref{fig:driving} depicts this game.
% A depiction of this game is given in \cref{fig:driving}. 
The players' objective and constraints are described as follows:
% the following: 

\begin{equation}
    \begin{aligned}
    &0 = h_{T+1}^1(x_{T+1}) := x^{1,2}_{T+1} + 2,  \\
    &0 = h_{T+1}^2(x_{T+1}) := x^{2,2}_{T+1} + 2,  \\
    &0 = h_{T+1}^3(x_{T+1}) := x^{3,2}_{T+1} - 2,  \\
    &0 \leq g_t^1(x_t) := 
     \left\| \begin{matrix} x_t^{1,1}-x_t^{3,1} \\ x_t^{1,2}-x_t^{3,2}\end{matrix} \right\|_2 - d_{min} , \ \ t\in\mathbf{T^+}, \\
     &0 \leq g_t^2(x_t) := 
     \left\| \begin{matrix} x_t^{2,1}-x_t^{1,1} \\ x_t^{2,2}-x_t^{1,2}\end{matrix} \right\|_2 - d_{min} , \ \ t\in\mathbf{T^+}, 
     \end{aligned}
\end{equation}
\begin{equation}
    \begin{aligned}
    &L^1(x,u) = \Big( \sum_{t=1}^T  \sigma_1 \left\| u_t^1 \right\|_2^2 + \sigma_2 (x_t^{1,2}+2)^2 + \sigma_3 (x_t^{1,3}-v_{goal}^1)^2 \Big) + \sigma_{polite} L^2(x,u), \\
    &L^2(x,u) = \sum_{t=1}^T  \sigma_1 \left\| u_t^2 \right\|_2^2 + \sigma_2 (x_t^{2,2}+2)^2 + \sigma_3 (x_t^{2,3}-v_{goal}^2)^2, \\
    &L^3(x,u) = \sum_{t=1}^T  \sigma_1 \left\| u_t^3 \right\|_2^2 + \sigma_2 (x_t^{3,2}-2)^2 + \sigma_3 (x_t^{3,3}-v_{goal}^3)^2, \\
\end{aligned}
\end{equation}
The initial state is defined to be $\hat{x}_1 := [0, 2, 1, 0, -10, -2, 1.5, 0, 30, 2, 0.75, 0]^\intercal$. The lateral centers of the left and right lanes are $-2$ and $2$, respectively. The constant $d_{min}$ is the minimum separation distance from vehicle centers needed to avoid collision, which in this example is $3.3$. The parameters $\sigma_{\{1,2,3\}}$ scale the relative weights between objective terms, and are $\sigma_1=10,\ \sigma_2=0.2,\text{ and } \sigma_3=10$. The desired speeds of the three players are $v_{goal}^1 = 1$, $v_{goal}^2 = 1.5$, and $v_{goal}^3 = 0.75$. The term $\sigma_{polite}$ is the politeness coefficient and weights how much player $1$ cares about interfering with player $2$'s objective. We consider two variants, with $\sigma_{polite}=0$ and $\sigma_{polite}=5$. This example is similar to games explored in \cite{laine2020multihypothesis}. Visualizations of the GFQNE solutions for the two variants are given in \cref{fig:driving}, and computation details for the $\sigma_{polite}=5$ case are given in \cref{tab:driving} (details for the case $\sigma_{polite} = 0$ are omitted for brevity).

\section{Conclusion}
\label{sec:conclusion}

In this paper, we presented a non-parametric, implicit policy formulation for generalized feedback Nash equilibrium problems. We developed efficient solution methods for the equality-constrained and inequality-constrained Linear-Quadratic (LQ) cases and the general nonlinear case. To the best of our knowledge, these comprise the first solution methods for finding Feedback Nash Equilibria for both LQ and nonlinear games with general constraints. Dynamic games have numerous applications; we demonstrate the utility of our method in a trajectory planning setting for a lane-changing autonomous vehicle.

Future work should consider other solution methods for the general game which build upon our solution to the equality-constrained LQ game. In particular, penalty methods and interior point methods may also be competitive. Furthermore, the results presented should be extended to cases in which strict complementarity does not hold. Necessary conditions based on sub-differentials could be used in such cases. It is also important to further develop a deeper theoretical understanding of the ``policy quasi-gradient'' approximation and its implications on convergence to local solutions. Finally, a high-performance, optimized implementation of our method will facilitate its use in practical applications by other researchers and practitioners.

% \section*{Acknowledgements}

% Research supported by NSF under grant DMS 2013985 \say{THEORINet: Transferable, Hierarchical, Expressive, Optimal, Robust and Interpretable Networks} and U.S. Office of Naval Research MURI grant N00014-16-1-2710.

% \begin{figure}[t]
% \centering
% \begin{minipage}{.45\textwidth}
%   \centering
%   \includegraphics[width=.9\linewidth]{figs/polite.png}
%   \captionof{figure}{Solution of game \\ when $\sigma_{polite}=0$}
%   \label{fig:test1}
% \end{minipage}%
% \begin{minipage}{.45\textwidth}
%   \centering
%   \includegraphics[width=.9\linewidth]{figs/impolite}
%   \captionof{figure}{Solution of game \\ when $\sigma_{polite} = 5$}
%   \label{fig:test2}
% \end{minipage}
% \end{figure}

% \begin{figure}[t]
%     \centering
%     \includegraphics[width=0.8\textwidth]{figs/polite.png}
%     \caption{}
%     \label{fig:driving}
% \end{figure}

% \section{Alternatives: Interior Point and Augmented Lagrangian Methods}

% \subsection{Review of Penalty Methods}

% \subsection{GFQNE via an Interior Point Method}

% \subsection{GFQNE via an Augmented Lagrangian Method}

% \section{Summary of Methods}

% \section{Related Work}

% \section{Outlook and Conclusion}

% \appendix
% \section{An example appendix} 

\bibliographystyle{siamplain}
\bibliography{references}
\end{document}